\documentclass[11pt]{article}

\usepackage[letterpaper,margin=1.05in]{geometry}
\usepackage{amsmath,amssymb,amsthm,mathtools}
\usepackage{microtype}
\usepackage{enumitem}
\usepackage[colorlinks=true,linkcolor=blue,citecolor=blue,urlcolor=blue]{hyperref}

\newtheorem{theorem}{Theorem}[section]
\newtheorem{proposition}[theorem]{Proposition}
\newtheorem{lemma}[theorem]{Lemma}
\newtheorem{corollary}[theorem]{Corollary}
\newtheorem{assumption}[theorem]{Assumption}
\theoremstyle{definition}

\theoremstyle{remark}
\newtheorem{remark}[theorem]{Remark}

\newcommand{\cN}{\mathcal N}

\newcommand{\cT}{\mathcal T}

\newcommand{\bP}{\mathbb P}
\newcommand{\bQ}{\mathbb Q}
\newcommand{\bE}{\mathbb E}
\newcommand{\one}{\mathbf 1}
\newcommand{\dd}{\,\mathrm d}
\newcommand{\ip}[2]{\langle #1,#2\rangle}

\title{Dynamical Equations for Poisson Galton--Watson Trees
and Component Densities of Sparse Inhomogeneous Random Graphs}

\author{Bohan Hu\\\texttt{hbh995d8@mail.ustc.edu.cn}
  \and Wen Sun\\\texttt{wensun.ustc@gmail.com}}
\date{}

\hypersetup{
  pdftitle={Dynamical Equations for Poisson Galton--Watson Trees and Component Densities of Sparse Inhomogeneous Random Graphs},
  pdfauthor={Bohan Hu and Wen Sun},
  pdfsubject={Branching processes, pruning, and inhomogeneous random graphs},
  pdfkeywords={branching process, change of measure, inhomogeneous random graph, pruning, spinal decomposition}
}

\begin{document}
\maketitle

\begin{center}
\small
School of Mathematical Sciences, University of Science and Technology of China,\\
Jinzhai Road 96, Hefei 230026, China.\\[2mm]
\end{center}

\begin{abstract}
We study Poisson Galton--Watson trees on a standard Borel type space when the
offspring kernel is multiplied by a scalar parameter.  On finite trees, we
identify the Radon--Nikodym derivative between two parameter values and show
that it remains measurable after projection to the total progeny measure.
Under a uniform bound on the offspring intensities, differentiation yields
exact differential and integral equations for the projected laws without
irreducibility, reversibility, or a positive eigenfunction.  With an additional
positive eigenfunction bounded above and away from zero, we relate these
equations to an infinite spinal tree, uniform pruning, the Doob transform, and
the Aldous--Pitman ascension process.  For a uniformly bounded offspring kernel,
we also prove
uniform exponential integrability of the total progeny throughout the
spectrally subcritical regime.  As an application, under the graphical-kernel
assumptions of Bollob\'as, Janson and Riordan, the
number $K_n$ of connected components satisfies
\[
 \frac{K_n}{n}\longrightarrow \bE_\pi\!\left[\frac1{T_u}\right]
\]
in probability and in $L^1$, where $T_u$ is the total progeny of the associated
branching process and $1/\infty=0$.  If $q_u(x)$ is its extinction probability
from type $x$, re-rooting and extinction duality give the explicit limit
\[
 \int_S q_u\,\dd\pi-\frac{u}{2}
 \iint_{S^2}\kappa(x,y)q_u(x)q_u(y)\,\pi(\dd x)\pi(\dd y).
\]
This extends the finite-type and compact-continuous formulas to the full BJR
graphical-kernel setting, allowing separable noncompact type spaces and kernels
that may be unbounded or reducible.
\end{abstract}

\noindent\textit{Keywords:} branching process; change of measure;
inhomogeneous random graph; Poisson Galton--Watson tree; pruning; spinal
decomposition.

\smallskip
\noindent\textit{MSC 2020:} Primary 60J80; secondary 05C80, 60J27, 60G42.

\section{Introduction}

The local exploration of a sparse inhomogeneous random graph is naturally
approximated by a Poisson Galton--Watson tree whose particles carry types.
In the homogeneous Erd\H{o}s--R\'enyi model, tree-component counts and their
Gaussian fluctuations were studied by Pittel \cite{Pittel1990}, while
Puhalskii \cite{Puhalskii2005} analysed normal, moderate, and large deviations
for component-count processes.  Finite-type inhomogeneous extensions were
developed by S\"oderberg \cite{Soderberg2002}.  Under the more general
graphical-kernel conditions of Bollob\'as, Janson and Riordan, Theorem~9.1 of
\cite{BJR2007} gives the asymptotic proportion of vertices in components of
each fixed order; see also the branching-process construction in Section~2 of
that paper.  This suggests studying graph observables through functionals of
the associated branching process.

In this paper we vary the intensity of every offspring process by a scalar
parameter $u$.  When $0\leq u\leq1$, this operation can be coupled by retaining
each edge of the tree at parameter $1$ independently with probability $u$.  If
$\Lambda_u$ denotes the resulting total progeny measure and $T_u=\Lambda_u(S)$
its total mass, then the law of $\Lambda_u$ satisfies an exact evolution equation.
The key observation is simple: on every finite family tree $t$, changing $u$
changes the Radon--Nikodym derivative only through the number of edges
$|t|-1$ and the sum of the offspring intensities
$\sum_{z\in t}m(x_z)$ over all vertices.  Those two quantities are measurable
functions of the total progeny measure $\Lambda_t$.  Consequently, the
projection from trees to total progeny loses no information relevant to this
change of measure.
The term \emph{dynamical} in the title refers to evolution in this intensity
parameter; the resulting measure-valued equation is not asserted to be the
forward equation of a time-homogeneous Markov process.

The central result, Theorem~\ref{thm:measure-RN}, is the Radon--Nikodym identity
\begin{equation}\label{eq:intro-RN}
 \frac{\dd P_{\alpha,u}^{\Lambda}}{\dd P_{\alpha,v}^{\Lambda}}(\mu)
 =\left(\frac uv\right)^{\mu(S)-1}
   \exp\{- (u-v)\ip{\mu}{M\one}\},\qquad u,v>0,
\end{equation}
on the space of finite integer-valued measures.  Here $M$ is the unscaled
offspring kernel and $\alpha$ is the root-type law.  Differentiating
\eqref{eq:intro-RN} yields the localized and integrable forms of the governing
differential and integral equations in Theorem~\ref{thm:dynamics}.  The
change-of-measure identity is the result in the paper that holds at the greatest
level of generality.  Unlike a spine or ascension argument, its proof requires
neither Perron--Frobenius theory nor a change of measure on infinite trees.

Under the additional assumption that $M\varphi=\rho\varphi$ for a function
bounded above and away
from zero, a complementary tree-valued description is available: we construct
the $\varphi$-size-biased infinite tree and show that off-spine thinning at a
lower intensity maps it to the corresponding size-biased tree at that
intensity (Proposition~\ref{prop:pruning-spine}).  This is a structural
complement to the general Radon--Nikodym identity.  The direct proofs of
Theorems~\ref{thm:measure-RN} and~\ref{thm:dynamics} are independent of
Section~\ref{sec:spine}; under the eigenfunction assumption, however, the
spinal construction gives a second, structural derivation of the dynamical
equations.  The Doob $\varphi$-transform normalizes the
total offspring intensity to one.  Its untyped genealogy is therefore the
Poisson Galton--Watson pruning process studied by Aldous and Pitman: their
transition and ascension results are in Sections~4.2--4.5 of
\cite{AldousPitman1998}.  We lift their ascension representation to the typed
tree and show that its one-time mixture identity is exactly the normalized form
of our integral dynamical equation.  We then transport this identity back
through the $\varphi$-transform and recover the equations for the original
kernel.  The role of Section~\ref{sec:spine} is to record the Poisson-kernel
typed forms of these classical results, prove the pruning commutation, and
establish their compatibility with the general dynamics.

The size-biased construction in Section~2, particularly equations
(2.1)--(2.2), of Lyons, Pemantle and Peres
\cite{LyonsPemantlePeres1995} is a classical source for spinal changes of
measure.  Section~12, particularly Proposition~12.1, of Biggins and Kyprianou
\cite{BigginsKyprianou2004} develops the corresponding martingale change of
measure for Galton--Watson processes on a general type space.  The present
spinal construction specializes that framework to a Poisson offspring kernel
and records its compatibility with uniform edge pruning.  Beyond the
Aldous--Pitman Poisson edge-pruning process, Abraham, Delmas and He
\cite{AbrahamDelmasHe2012} study node pruning for general Galton--Watson trees
and give its ascension laws and path representation in Propositions~4.6--4.7,
while Theorem~3 of He and Winkel
\cite{HeWinkel2014} gives an invariance principle for edge-pruning processes.

Theorem~\ref{thm:exp-full} gives a short resolvent proof of exponential
integrability of $T_u$ whenever
$u\rho(M)<1$, where $\rho(M)$ is the spectral radius of $M$ on bounded
measurable functions.  Thus the conclusion holds throughout this spectrally
subcritical regime, even when the maximal one-step mean is larger than one.

Finally, let $K_n$ be the number of connected components in the BJR
inhomogeneous random graph.  Theorem~9.1 of \cite{BJR2007} gives the asymptotic
proportion of vertices in components of every fixed order.  Summing these limits
with a uniform tail bound yields
$K_n/n\to\bE_\pi[1/T_u]$ in Theorem~\ref{thm:BJR-component}, where $T_u$ is
the total progeny of the associated branching process and $1/\infty=0$.
Detailed balance, extinction duality, and the re-rooting identity of
Section~\ref{sec:rerooting} then give the explicit formula in
Theorem~\ref{thm:IRG-general}.

Related formulas have been established under different assumptions and within
different frameworks.  In finite type, Corollary~1 of Yu and Sun
\cite{YuSun2024} gives the component-count law of large numbers away from
criticality.  Their Remark~1 states the reciprocal-total-progeny representation
and suggests that it should admit a proof based on pruning.  Andreis, K\"onig,
Langhammer and Patterson assume a compact metric type space and a continuous
irreducible kernel.  Their Theorem~2.1 and equation~(2.5)
\cite{AndreisEtAl2023} give a law of large numbers for the microscopic
component measure, and their equation~(2.12) directly relates that measure to
the total-progeny law of the associated branching process.  Lemmas~6.7--6.8 of
that paper identify its total mass, including at criticality.  Thus the
branching-process interpretation is already present under their assumptions.
Our component-density theorem instead allows the separable, possibly
noncompact type spaces and the almost-everywhere continuous, integrable,
possibly unbounded or reducible graphical kernels of \cite{BJR2007}.  This
greater generality concerns the law of large numbers for the component density;
it does not subsume the large- or moderate-deviation principles in those two
papers.  The method here is also different: it combines the BJR fixed-component
asymptotics with a reversible-tree identity and branching-process duality.

\paragraph{Scope of the assumptions.}
The finite-tree and total-progeny Radon--Nikodym identities
(Proposition~\ref{prop:tree-RN} and Theorem~\ref{thm:measure-RN}) require only
$M(x,S)<\infty$ at every type, which ensures that the Poisson offspring process
is well defined there; they do not require the uniform bound
\eqref{eq:bounded-kernel}.  The dynamical equations
(Theorem~\ref{thm:dynamics}) are stated under that uniform bound, which controls
$A(\mu)$ by $D\mu(S)$ both on finite strata and in the global integrability
argument.  Section~\ref{sec:spine} additionally assumes the bounded positive
eigenfunction in Assumption~\ref{ass:eigenfunction}.  The exponential-moment
result assumes \eqref{eq:bounded-kernel} and $u\rho(M)<1$ on $B_b(S)$.  The
re-rooting identity uses instead detailed balance and almost-sure finiteness,
with finite average degree for its negative-first-moment corollary.  Finally,
Section~\ref{sec:graphs} states the BJR graphical-kernel assumptions and permits
kernels that are not uniformly bounded.  Thus no eigenfunction, reversibility,
or random-graph hypothesis is implicit in the general change-of-measure
argument.

The paper is organized as follows.  Section~\ref{sec:model} defines the tree and
the pruning coupling.  Section~\ref{sec:spine} develops the spinal,
$\varphi$-transform, and ascension structure under an additional eigenfunction
hypothesis.  Section~\ref{sec:dynamics} proves the Radon--Nikodym and
dynamical identities without that hypothesis.  Section~\ref{sec:exp} treats exponential moments.
Section~\ref{sec:rerooting} proves the re-rooting identity under detailed balance,
and Section~\ref{sec:graphs} derives the random-graph component-density limits.

\section{Poisson Galton--Watson trees}\label{sec:model}

\subsection{Type space and offspring kernel}

Throughout, $(S,\mathcal S)$ is a standard Borel space.  Let $M$ be a finite
kernel from $S$ to $S$: for every $x$, $M(x,\cdot)$ is a finite measure, and for
every $A\in\mathcal S$, the map $x\mapsto M(x,A)$ is measurable.  Write
\[
 m(x):=M(x,S),\qquad
 Mf(x):=\int_S f(y)M(x,\dd y).
\]
Unless stated otherwise, we assume
\begin{equation}\label{eq:bounded-kernel}
 D:=\sup_{x\in S}m(x)<\infty.
\end{equation}
Thus $M$ is a bounded positive operator on the Banach lattice $B_b(S)$ of
bounded measurable functions with the supremum norm.

Let $\mathbb U=\{\varnothing\}\cup\bigcup_{k\geq1}\mathbb N^k$ be the
Ulam--Harris tree, with root $\varnothing$.  A typed family tree is a rooted
subtree $t\subset\mathbb U$ together with a type $x_v\in S$ for every $v\in t$.
The usual cylinder sigma-field makes the collection of typed locally finite
trees a standard Borel space.  Proposition~6.2 of \cite{LastPenrose2018}
provides a measurable enumeration of finite point configurations on a Borel
space; after identifying $S$ with a Borel subset of $[0,1]$, fix such an
enumeration for every offspring configuration.  This embeds the genealogy in
$\mathbb U$.  None of the laws or formulas below depends on the chosen
enumeration.

Denote this measurable tree space by $\mathfrak T$, and let
$\mathfrak T_f:=\{t\in\mathfrak T:|t|<\infty\}$ with the trace sigma-field.
Both the finiteness event and the map from $\mathfrak T_f$ to the space of
finite integer-valued measures on $S$ that sends a finite tree to its total
progeny measure are measurable.  Indeed,
\[
 |t|=\sum_{v\in\mathbb U}\one_{\{v\in t\}},\qquad
 \Lambda_t(A)=\sum_{v\in\mathbb U}
   \one_{\{v\in t,\ x_v\in A\}},\quad A\in\mathcal S,
\]
are increasing limits of measurable functions.  Hence
$\mathfrak T_f=\{|t|<\infty\}$ is measurable; since the evaluation maps
$\mu\mapsto\mu(A)$ generate the sigma-field on the point-measure space, so is
$t\mapsto\Lambda_t$ on $\mathfrak T_f$.  We write
$\bP_{\alpha,u}^{\mathrm{fin}}$ for the
restriction of the tree law to $\mathfrak T_f$; it is in general a
subprobability measure.  Recording the zero-offspring configuration at every
leaf makes elements of $\mathfrak T_f$ complete family trees rather than
finite truncations of possibly infinite trees.

For $u\geq0$, the Poisson Galton--Watson tree $G_u$ with kernel $uM$ is defined
as follows.  Conditional on a vertex having type $x$, the point measure of the
types of its children is a Poisson point process with intensity $uM(x,\dd y)$;
different vertices reproduce independently.  We write $\bP_{x,u}$ when the root
has fixed type $x$, and $\bP_{\alpha,u}=\int\bP_{x,u}\alpha(\dd x)$ when its type
has law $\alpha$.

Let $Z_{u,k}$ be the point measure of the types in generation $k$.  The total
progeny measure and total progeny are
\[
 \Lambda_u:=\sum_{k\geq0}Z_{u,k},\qquad T_u:=\Lambda_u(S),
\]
where $T_u$ may be infinite.  Let $\cN_f(S)$ denote the standard Borel space of
finite integer-valued measures on $S$.  The restriction of the law of
$\Lambda_u$ to $\cN_f(S)$ is denoted by $P_{\alpha,u}^{\Lambda}$.  It is a
probability measure when the tree becomes extinct almost surely and otherwise is
a subprobability measure.

\subsection{Uniform pruning}

For $0\leq u\leq v$, a tree with kernel $vM$ can be thinned to one with kernel
$uM$: independently retain each parent--child edge with probability $u/v$ and
keep the component of the root.  Corollary~5.9 of \cite{LastPenrose2018} (the
thinning theorem) shows that the result has law $G_u$.  Tree-valued pruning
chains in the Galton--Watson setting are developed in Sections~3.2--3.4 of Aldous and Pitman
\cite{AldousPitman1998}.  Node pruning and the associated ascension process for
general offspring laws are developed by Abraham, Delmas and He
\cite[Propositions~4.6--4.7]{AbrahamDelmasHe2012}; scaling limits for
branch-point (node) pruning and edge pruning are studied by He and Winkel
\cite{HeWinkel2014}, whose Theorems~2 and~3 are the respective invariance
principles.

In particular, attach independent uniform random variables $U_e$ to the edges of
$G_1$, and retain an edge exactly when
\begin{equation}\label{eq:correct-pruning}
 U_e\leq u.
\end{equation}
The root component has law $G_u$.  Notice that \eqref{eq:correct-pruning}
retains each edge with probability $u$, so $G_1$ is unchanged and $G_0$ consists
only of the root.

The proofs below only use the marginal laws.  They therefore also apply for
$u>1$, where the family may instead be coupled using a Poisson process in the
intensity parameter.  This latter coupling is an intensity-growth coupling,
not edge-retention pruning, because a retention probability cannot exceed one.
Thus every statement for $u>1$ concerns the genuine tree with kernel $uM$;
only its realization by retaining edges of $G_1$ is unavailable.

\section{Spinal bias, the \texorpdfstring{$\varphi$}{phi}-transform, and ascension}
\label{sec:spine}

The results in this section are structural properties of the tree-valued pruning
process.  Unlike the Radon--Nikodym argument in
Section~\ref{sec:dynamics}, they require an additional eigenfunction.  Keeping
the two arguments separate makes clear which conclusions hold for a general
bounded kernel and which belong to the Aldous--Pitman spinal picture.  The
direct Radon--Nikodym, dynamical, and exponential-moment proofs in
Sections~\ref{sec:dynamics} and~\ref{sec:exp} do not rely on this section.
After obtaining the normalized Aldous--Pitman mixture identity, we nevertheless
pull it back through the $\varphi$-transform to give a second derivation of the
dynamical equations under Assumption~\ref{ass:eigenfunction}.

\begin{assumption}[Bounded positive eigenfunction]\label{ass:eigenfunction}
There are $\rho>0$ and $\varphi\in B_b(S)$ such that
\begin{equation}\label{eq:eigenfunction}
 M\varphi=\rho\varphi,
 \qquad 0<a\leq\varphi(x)\leq b<\infty\quad(x\in S).
\end{equation}
\end{assumption}

\subsection{The additive martingale and the infinite spine}

The size-biased tree construction is classical in the one-type case; see
Lyons, Pemantle and Peres \cite{LyonsPemantlePeres1995}.  The general-type-space
martingale change of measure is treated by Biggins and Kyprianou
\cite{BigginsKyprianou2004}.  We give the Poisson-kernel specialization needed
for the pruning identities below.

Let $\mathcal F_n$ be the sigma-field generated by the typed tree through
generation $n$.  For $u>0$ define
\begin{equation}\label{eq:additive-martingale}
 W_{u,n}:=(u\rho)^{-n}\ip{Z_{u,n}}{\varphi}.
\end{equation}

\begin{proposition}[Spinal change of measure]\label{prop:spinal-change}
Under Assumption~\ref{ass:eigenfunction}, $(W_{u,n})_{n\geq0}$ is a
nonnegative $\bP_{x,u}$-martingale with mean $\varphi(x)$.  The consistent
finite-generation densities
\begin{equation}\label{eq:Q-density}
 \left.\frac{\dd\bQ_{x,u}}{\dd\bP_{x,u}}\right|_{\mathcal F_n}
 =\frac{W_{u,n}}{\varphi(x)}
\end{equation}
define a probability law $\bQ_{x,u}$ on trees.  Moreover, there is a joint law
of a tree and a distinguished infinite ray
$\xi=(\xi_0,\xi_1,\ldots)$ whose tree marginal is $\bQ_{x,u}$.

More explicitly, set
\begin{equation}\label{eq:spine-transition}
 \widehat M(x,\dd y)
 :=\frac{\varphi(y)}{\rho\varphi(x)}M(x,\dd y).
\end{equation}
At a spine vertex of type $x$, take an ordinary Poisson point process with
intensity $uM(x,\dd y)$ and add one distinguished child with type law
$\widehat M(x,\dd y)$; that child continues the spine.  Every nonspine vertex
reproduces with the original Poisson intensity $uM$.  This construction has
tree marginal $\bQ_{x,u}$.
\end{proposition}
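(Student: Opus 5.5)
The plan has three parts: the martingale property, the consistent extension to a law $\bQ_{x,u}$ on trees, and the identification of $\bQ_{x,u}$ with the explicit spine construction. The last part also supplies the joint law of tree and ray.

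\textbf{Martingale property.} I would condition on $\mathcal F_n$. Generation $n+1$ is a superposition of independent Poisson processes with intensities $uM(x_v,\cdot)$, one for each $v$ in generation $n$. Campbell's formula then gives
\[
\bE_{x,u}\bigl[\ip{Z_{u,n+1}}{\varphi}\mid\mathcal F_n\bigr]=u\ip{Z_{u,n}}{M\varphi}=u\rho\ip{Z_{u,n}}{\varphi}.
\]
This is finite because $\ip{Z_{u,n}}{\varphi}\le b\,Z_{u,n}(S)$, and $\bE\, Z_{u,n}(S)\le (uD)^n$ by \eqref{eq:bounded-kernel}. Hence $W_{u,n}$ is a nonnegative martingale with $W_{u,0}=\varphi(x)$. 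The densities \eqref{eq:Q-density} are therefore consistent probability densities on $(\mathcal F_n)$. Since $\mathfrak T$ is standard Borel and $\mathcal F_n\uparrow$ generates its sigma-field, the Kolmogorov/Ionescu-Tulcea extension theorem yields a unique probability law $\bQ_{x,u}$ on $\mathfrak T$.

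\textbf{Joint law on $\mathcal F_n$.} I would define the joint law directly, as the law produced by the spine construction in the statement. This is a well-defined Ionescu-Tulcea construction: each spine vertex gets an independent Poisson process plus one extra child of type $\sim\widehat M(x,\cdot)$, which is a probability kernel because $M\varphi=\rho\varphi$. The ray $\xi$ is infinite by construction. It then suffices to show that for every $n$ the construction, restricted to $\mathcal F_n$, has tree marginal equal to $W_{u,n}\,\dd\bP_{x,u}/\varphi(x)$.

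For this I would prove the stronger joint statement, by induction on $n$: for a tree $t$ through generation $n$ and a vertex $v$ in generation $n$,
\[
\bQ(\text{tree}|_n\in\dd t,\ \xi_n=v)=\frac{\varphi(x_v)}{(u\rho)^n\varphi(x)}\,\bP_{x,u}(\dd t).
\]
Summing over $v$ then gives $W_{u,n}/\varphi(x)$.

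The inductive step is a one-generation Poisson computation. Consider a spine vertex of type $y$ and a child configuration $\eta$ containing a marked point $w$. The law of "Poisson$(uM(y,\cdot))$ plus an independent point drawn from $\widehat M(y,\cdot)$" is, by the Mecke formula (Palm theory for Poisson processes, \cite{LastPenrose2018}), the measure
\[
\eta(\dd w)\,\frac{\varphi(w)}{u\rho\varphi(y)}\,\Pi_{uM(y,\cdot)}(\dd\eta).
\]
Combining this with the independent ordinary reproduction of nonspine vertices in generation $n$ propagates the formula from $n$ to $n+1$. Ordering and enumeration issues do not arise, since the labelling in $\mathbb U$ is a fixed measurable function of the offspring configuration.

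\textbf{Main obstacle.} I expect the main difficulty to be precisely this Mecke/Palm step. It requires stating carefully that the marked child is identified as a point of the configuration, so that the Ulam--Harris labelling is consistent. Once the one-step Palm identity is in place, the induction and the extension are routine.
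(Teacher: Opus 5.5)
Your proposal is correct and follows essentially the same route as the paper. It uses the same conditional-expectation computation for the martingale, the same extension step, and the same one-generation Mecke identity \eqref{eq:Mecke-spine}, read in the opposite direction. Your marked induction $\bQ(\text{tree}|_n\in\dd t,\ \xi_n=v)=\varphi(x_v)(u\rho)^{-n}\varphi(x)^{-1}\bP_{x,u}(\dd t)$ simply makes explicit the ``iteration along the distinguished ray'' that the paper states tersely, and it parallels the marked formula \eqref{eq:terminal-marked-measure} later in the paper.

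One small point concerns the extension step. A consistent family on an increasing filtration that generates the $\sigma$-field of a standard Borel space need not extend in general. The extension should therefore rest on the projective-limit structure of the truncated tree spaces, as the paper says. In any case, your Ionescu--Tulcea spine construction already supplies a law with the required $\mathcal F_n$-marginals, so existence of $\bQ_{x,u}$ holds regardless.
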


\begin{proof}
Conditional on $\mathcal F_n$, the branching property and
\eqref{eq:eigenfunction} give
\begin{align*}
 \bE_{x,u}[W_{u,n+1}\mid\mathcal F_n]
 &=(u\rho)^{-(n+1)}
   \sum_{z:\,|z|=n}uM\varphi(x_z)\\
 &=(u\rho)^{-n}\sum_{z:\,|z|=n}\varphi(x_z)
 =W_{u,n}.
\end{align*}
Also $W_{u,0}=\varphi(x)$, proving the first claim and the consistency of
\eqref{eq:Q-density}.  More explicitly, the martingale property says that the
measure with density $W_{u,n+1}/\varphi(x)$ restricts on $\mathcal F_n$ to the
one with density $W_{u,n}/\varphi(x)$.  The finite-generation tree spaces are
standard Borel and their restriction maps form a projective system; the
Kolmogorov extension theorem therefore gives the tree law $\bQ_{x,u}$.

It remains to identify the local construction.  Let $\eta$ be a Poisson point
process with intensity $uM(x,\dd y)$.  The offspring law at a spine vertex is
biased by
\begin{equation}\label{eq:size-biased-offspring}
 \frac{\ip{\eta}{\varphi}}{u\rho\varphi(x)}.
\end{equation}
After this bias, choose the next spine child from $\eta$ proportionally to its
$\varphi$-weight.  For every nonnegative measurable $F$ of a point
configuration and a distinguished point, the Mecke equation gives
\begin{align}
 &\frac1{u\rho\varphi(x)}
   \bE\int_S\varphi(y)F(\eta,y)\eta(\dd y)\notag\\
 &\hspace{2cm}=
   \int_S\widehat M(x,\dd y)\,
          \bE F(\eta+\delta_y,y).
 \label{eq:Mecke-spine}
\end{align}
Thus the biased configuration is an ordinary configuration plus the
distinguished child in \eqref{eq:spine-transition}.  The Mecke equation used
here is Theorem~4.1 of \cite{LastPenrose2018}.  Iterating
\eqref{eq:Mecke-spine} along the distinguished ray gives exactly the density
\eqref{eq:Q-density} at every generation.  The same consistent finite-level
construction, now retaining the distinguished vertices, gives the joint law of
the tree and its infinite ray, and proves the remaining assertions.
\end{proof}

In the one-type critical or subcritical setting, this is the classical tree
conditioned to survive and its spinal decomposition; see Proposition~2 and
Corollary~3 of \cite{AldousPitman1998}.  In a general type space we use the
unambiguous term \emph{$\varphi$-size-biased tree}: identifying it with a limit
conditioned on survival may require additional irreducibility or ratio-limit
hypotheses, none of which are needed here.

\subsection{Pruning and spinal bias}

Write $G_{x,u}^{\infty}$ for the infinite spinal tree of
Proposition~\ref{prop:spinal-change}.  Notice that its spine transition
$\widehat M$ does not depend on $u$.

\begin{proposition}[Pruning commutes with spinal bias]
\label{prop:pruning-spine}
Let $0<u\leq v$.  Start with $G_{x,v}^{\infty}$, retain every off-spine edge
independently with probability $u/v$, retain every spine edge, and delete the
descendants separated by the removed edges.  The resulting infinite spinal tree
has law $G_{x,u}^{\infty}$.
\end{proposition}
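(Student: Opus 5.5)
The plan is to use the explicit local construction of $G_{x,v}^{\infty}$ from Proposition~\ref{prop:spinal-change} and verify generation by generation that the pruned object has the same construction with $v$ replaced by $u$. The structure of $G_{x,v}^{\infty}$ is a spine Markov chain $\xi_0=x,\xi_1,\xi_2,\ldots$ with transition $\widehat M$. Attached to each spine vertex of type $\xi_k$ is an independent Poisson point process $\eta_k$ of ordinary children with intensity $vM(\xi_k,\dd y)$, and each ordinary child roots an independent copy of $G_v$ started from its type. The pruning acts independently on the off-spine edges, and the spine edges are kept. The spine chain is therefore untouched and still has transition $\widehat M$. As noted after the proposition, this kernel does not depend on the intensity parameter, and this is the key point.

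Next I would condition on the entire spine $(\xi_k)_{k\ge0}$ and treat each spine vertex separately. The off-spine edges split into two kinds. The first kind joins a spine vertex to one of its ordinary children. Retaining each point of $\eta_k$ independently with probability $u/v$ is an independent thinning of a Poisson process. By the thinning theorem (Corollary~5.9 of \cite{LastPenrose2018}), the retained points form a Poisson process with intensity $uM(\xi_k,\dd y)$. The second kind lies inside the subtrees hanging from retained ordinary children. On each such subtree the pruning is exactly the uniform pruning of Section~\ref{sec:model}, applied to a copy of $G_v$ with edge-retention probability $u/v$. Its root component therefore has law $G_u$ from the child's type.

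The remaining step, and the only delicate one, is independence. The retention marks are independent of the tree and of each other. The thinning of $\eta_k$ uses only the marks on the edges from $\xi_k$, while the pruning of a given hanging subtree uses only that subtree and its own marks. Conditional on the spine and on the retained children, the pruned subtrees are therefore independent copies of $G_u$ from the children's types, and the thinned processes $\eta_k'$ are independent across $k$ given the spine. To make this rigorous I would write the off-spine structure at each spine vertex as a marked Poisson process: each point $y$ of $\eta_k$ carries the mark (retention bit, subtree with its edge marks). The marking theorem then says the retained points with their pruned subtrees form a Poisson process with intensity $uM(\xi_k,\dd y)$, marked by independent $G_u$ trees.

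Finally I would integrate out the spine. The pruned tree then has exactly the description of $G_{x,u}^{\infty}$ in Proposition~\ref{prop:spinal-change}. For full rigor, equality in law can be checked on the first $n$ generations for each $n$, together with the distinguished ray, using the consistent finite-level construction; equality of all finite-dimensional restrictions then gives equality of the laws.
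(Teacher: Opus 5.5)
Your proposal is correct and follows the paper's own argument: the spine edges and the parameter-free transition $\widehat M$ are untouched, Poisson thinning turns the ordinary offspring intensity $vM$ at each spine vertex into $uM$, uniform pruning turns each hanging $G_v$ bush into a $G_u$ bush, and independence comes from the thinning/marking theorem (Corollary~5.9 of Last--Penrose). Conditioning on the whole spine, writing the off-spine structure as a marked Poisson process, and identifying laws through the finite-generation restrictions only spell out details that the paper compresses into that single appeal to Corollary~5.9.
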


\begin{proof}
By definition of the off-spine pruning protocol in the statement, every spine
edge is retained deterministically.  Thus only ordinary offspring and their
descendant bushes are thinned.
At a spine vertex, Proposition~\ref{prop:spinal-change} decomposes the offspring
as an ordinary Poisson process with intensity $vM$ plus the distinguished spine
child with transition $\widehat M$.  Independent thinning changes the first
intensity to $uM$ and leaves the distinguished child unchanged.  The same
thinning turns each nonspine descendant tree with kernel $vM$ into one with
kernel $uM$.  Independence is preserved by Corollary~5.9 of
\cite{LastPenrose2018}, so the local rules are precisely those of
$G_{x,u}^{\infty}$.
\end{proof}

There is also a useful formula when the spine itself is pruned.  Start with
$G_{x,1}^{\infty}$, retain every edge independently with probability $u<1$,
and let $G_{x,u}^{*}$ be the root component.  Denote its law by
$\bP_{x,u}^{*}$ and the generation of a vertex $z$ in a tree $t$ by $|z|$.

\begin{corollary}[Terminal-spine bias]\label{cor:terminal-spine}
If $0\leq u<1$ and $u\rho<1$, then $G_{x,u}^{*}$ and $G_u$ are finite almost surely and
\begin{equation}\label{eq:terminal-spine-density}
 \frac{\dd\bP_{x,u}^{*}}{\dd\bP_{x,u}}(t)
 =H_{x,u}^{*}(t)
 :=\frac{1-u}{\varphi(x)}
   \sum_{z\in t}\rho^{-|z|}\varphi(x_z).
\end{equation}
When $\rho=1$, the bias is $(1-u)\ip{\Lambda_t}{\varphi}/\varphi(x)$.
\end{corollary}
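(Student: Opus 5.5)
Under $\bP_{x,1}^\infty$ with all edges retained with probability $u$, the root component $G^*_{x,u}$ is found as follows. Let the spine be $\xi_0=\varnothing,\xi_1,\dots$. Let $J$ be the index of the last spine vertex still connected to the root, i.e.\ the first spine edge $(\xi_J,\xi_{J+1})$ that is removed. Then $\bP(J=k)=u^k(1-u)$, and $J<\infty$ almost surely because $u<1$. Conditional on $J=k$, the spine edges up to $\xi_k$ are kept. Off-spine edges, and the bushes hanging off spine vertices $\xi_0,\dots,\xi_k$, are thinned independently with probability $u$. By Proposition~\ref{prop:pruning-spine}, or directly by Corollary~5.9 of \cite{LastPenrose2018}, each spine vertex $\xi_j$ ($j\le k$) therefore has ordinary offspring that is Poisson with intensity $uM(x_{\xi_j},\cdot)$, and every nonspine descendant tree is a $G_u$ tree. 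The retained spine child at $\xi_j$ ($j<k$) has type law $\widehat M$. At $\xi_k$ the spine child is removed, so $\xi_k$ reproduces exactly like an ordinary $uM$ vertex. Hence $G^*_{x,u}$ given $J=k$ is a $G_u$-tree with one marked vertex at generation $k$, the marked path having been produced by adding $\widehat M$-children to Poisson$(uM)$ configurations.

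Next I identify the density of this marked tree with respect to $\bP_{x,u}$, via the Mecke equation \eqref{eq:Mecke-spine} used in reverse, generation by generation. Write $\widehat M(x,\dd y)=\frac{\varphi(y)}{\rho\varphi(x)}M(x,\dd y)$. Adding a distinguished child $y\sim\widehat M$ to a Poisson$(uM)$ configuration $\eta$ has the same law as the configuration $\eta$ reweighted by $\int\varphi(y)\eta(\dd y)/(u\rho\varphi(x))$, with the child chosen from it proportionally to $\varphi$. Iterating along $k$ generations, for nonnegative measurable $F$ of a tree and a marked vertex,
\[
\bE\bigl[F(G^*_{x,u},\xi_J)\one_{\{J=k\}}\bigr]
=u^k(1-u)\,\bE_{x,u}\Bigl[\sum_{|z|=k}\frac{\varphi(x_z)}{(u\rho)^k\varphi(x)}F(G_u,z)\Bigr].
\]
The product of the successive $\varphi$-ratios telescopes to $\varphi(x_z)/\varphi(x)$, and the factors $u^k$ cancel. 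This leaves $(1-u)\rho^{-k}\varphi(x_z)/\varphi(x)$. Summing over $k$ and taking $F$ independent of the mark gives the stated density $H^*_{x,u}(t)$, by monotone convergence.

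For finiteness, $G_u$ is finite almost surely because $\bE_{x,u}|G_u|\le\sum_k (u\rho)^k b/a<\infty$. This follows since $\bE_{x,u}\ip{Z_{u,k}}{\varphi}=(u\rho)^k\varphi(x)$ and $a\le\varphi\le b$. For $G^*_{x,u}$, given $J=k$ the tree consists of $k+1$ spine vertices, each carrying finitely many Poisson$(uM)$ children with $G_u$ bushes, so it is finite almost surely. Alternatively, apply the density formula to $F=\one_{\{|t|=\infty\}}$. Since $\bE_{x,u}H^*_{x,u}=(1-u)\sum_k u^k\cdot$(bounded)$\,<\infty$ and $\bP_{x,u}(|t|=\infty)=0$, this gives $\bP^*_{x,u}(|t|=\infty)=0$. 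The case $\rho=1$ is immediate, since $\sum_z\varphi(x_z)=\ip{\Lambda_t}{\varphi}$.

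The main obstacle is the careful conditioning at the terminal spine vertex $\xi_J$. One must check that deleting the spine edge exactly restores the ordinary Poisson$(uM)$ offspring there; this relies on the spine child being added on top of an independent Poisson process rather than drawn from it. One must also check that the Mecke iteration is justified on the measurable enumeration in $\mathbb U$, where the marked vertex is labelled through the fixed configuration enumeration. I would handle the latter by working with functionals $F$ invariant under relabelling, or by noting that the laws do not depend on the enumeration.
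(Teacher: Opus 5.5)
Your proposal is correct and follows essentially the same route as the paper. Both arguments prune off-spine edges via Proposition~\ref{prop:pruning-spine}, use an independent geometric number of retained spine edges, and iterate \eqref{eq:Mecke-spine} to obtain the marked-vertex identity in which $u^k$ cancels against $(u\rho)^{-k}$. They then sum over the mark and get finiteness from $\sup_x\bE_{x,u}T_u\leq (b/a)/(1-u\rho)$. The only cosmetic difference is that the paper disposes of $u=0$ separately, where your factor $(u\rho)^{-k}$ is undefined for $k\geq1$; those terms carry probability zero.
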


\begin{proof}
The case $u=0$ is immediate, so suppose $u>0$.
First prune only the off-spine edges.  By
Proposition~\ref{prop:pruning-spine}, the result is $G_{x,u}^{\infty}$.
The number $H$ of consecutive retained spine edges is independent and satisfies
$\bP(H=h)=(1-u)u^h$.  Repeated application of
\eqref{eq:Mecke-spine} shows, as an equality of measures on finite typed trees
with a marked vertex $z$ at height $h$, that
\begin{equation}\label{eq:terminal-marked-measure}
 \bP(G_{x,u}^{*}\in\dd t,\,\xi_H=z,\,H=h)
 =\frac{1-u}{\varphi(x)}\rho^{-h}\varphi(x_z)
   \bP_{x,u}(G_u\in\dd t).
\end{equation}
Indeed, the $h$ retained spine edges contribute $u^h$, whereas the $h$ spinal
changes of measure contribute $(u\rho)^{-h}\varphi(x_z)/\varphi(x)$.
Summing \eqref{eq:terminal-marked-measure} over all $z\in t$ gives
\eqref{eq:terminal-spine-density}.

Finally, positivity of $M$, the bounds on $\varphi$, and
$M\varphi=\rho\varphi$ give
\[
 M^n\one\leq a^{-1}M^n\varphi
 =a^{-1}\rho^n\varphi\leq (b/a)\rho^n\one.
\]
Since $u\rho<1$, it follows that
\[
 \sup_x\bE_{x,u}T_u
 =\sup_x\sum_{n\geq0}u^nM^n\one(x)
 \leq \frac{b/a}{1-u\rho}<\infty,
\]
so $G_u$ is finite almost surely.  Formula
\eqref{eq:terminal-spine-density} integrates to one because
\begin{align*}
 \bE_{x,u}H_{x,u}^{*}(G_u)
 &=\frac{1-u}{\varphi(x)}
   \sum_{n\geq0}\rho^{-n}\bE_{x,u}\ip{Z_{u,n}}{\varphi}\\
 &=(1-u)\sum_{n\geq0}u^n=1.
\end{align*}
Moreover, the pruned spine has finite geometric length and only finitely many
off-spine bushes are attached to it; each bush is finite by the preceding bound.
Thus $G_{x,u}^{*}$ is finite almost surely as well.
\end{proof}

\subsection{The \texorpdfstring{$\varphi$}{phi}-transform}

The kernel $\widehat M$ in \eqref{eq:spine-transition} is a Markov kernel because
$\widehat M(x,S)=1$.  On bounded functions,
\begin{equation}\label{eq:operator-similarity}
 \widehat M=\rho^{-1}D_{\varphi}^{-1}MD_{\varphi},
\end{equation}
where $D_\varphi f=\varphi f$.  Since $D_\varphi$ and its inverse are bounded,
$\widehat M$ has spectral radius one.  Consequently the eigenvalue $\rho$ in
Assumption~\ref{ass:eigenfunction} equals the spectral radius $\rho(M)$ of $M$
on $B_b(S)$.  Let $\widehat G_u$ be the Poisson
Galton--Watson tree with offspring kernel $u\widehat M$ and fixed root type $x$;
write its law as $\widehat\bP_{x,u}$.

\begin{proposition}[The $\varphi$-transform]\label{prop:phi-transform}
For every $u>0$, on finite typed trees rooted at type $x$,
\begin{equation}\label{eq:phi-transform-density}
 \frac{\dd\widehat\bP_{x,u}^{\mathrm{fin}}}
      {\dd\bP_{x,u/\rho}^{\mathrm{fin}}}(t)
 =\frac1{\varphi(x)}
   \exp\left\{u\sum_{z\in t}\left(\frac{m(x_z)}\rho-1\right)\right\}
   \prod_{z\in t}\varphi(x_z)^{1-N_z},
\end{equation}
where $N_z$ is the number of children of $z$.  In particular, if $\rho=1$ the
two trees in \eqref{eq:phi-transform-density} have the same parameter $u$.
\end{proposition}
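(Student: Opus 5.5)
We compute both finite-tree laws explicitly as products over the vertices of $t$ and then take the quotient. Fix a finite typed tree $t$ rooted at type $x$. The vertex $z$ has type $x_z$ and $N_z$ children with types $(x_{zj})_{j\le N_z}$.

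\textbf{Step 1: the law $\bP_{x,u/\rho}$.} Under this law, the offspring configuration of $z$ is Poisson with intensity $(u/\rho)M(x_z,\cdot)$. Its Janossy density with respect to $M(x_z,\cdot)^{\otimes N_z}$ (symmetrized, with the enumeration factor $1/N_z!$ handled identically for both laws) is
\[
 e^{-u m(x_z)/\rho}\,(u/\rho)^{N_z}.
\]
The reference measure on finite typed trees is the product over vertices of the unnormalized symmetric measures $M(x_z,\cdot)^{\otimes N_z}/N_z!$, built generation by generation, together with $\delta_x$ at the root. This is the same device as in Proposition~\ref{prop:tree-RN}. I would make it precise by conditioning successively on the generations, so that the density is justified inductively on $\mathcal F_n$ restricted to trees of height at most $n$.

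\textbf{Step 2: the law $\widehat\bP_{x,u}$.} Under this law, the intensity at $z$ is
\[
 u\widehat M(x_z,\dd y)=\frac{u\varphi(y)}{\rho\varphi(x_z)}M(x_z,\dd y),
\]
which has total mass $u$ because $\widehat M$ is Markov. With respect to the same reference measure, its Janossy density is
\[
 e^{-u}\,(u/\rho)^{N_z}\,\varphi(x_z)^{-N_z}\prod_{j}\varphi(x_{zj}).
\]

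\textbf{Step 3: the quotient.} Dividing vertex by vertex, the factors $(u/\rho)^{N_z}$ cancel. The exponential factors combine into
\[
 \exp\Bigl\{u\sum_{z}\bigl(m(x_z)/\rho-1\bigr)\Bigr\}.
\]
The remaining product is
\[
 \prod_z\varphi(x_z)^{-N_z}\prod_z\prod_j\varphi(x_{zj}).
\]
Every non-root vertex is the child of exactly one vertex, so the double product equals $\prod_{z\ne\varnothing}\varphi(x_z)=\varphi(x)^{-1}\prod_{z\in t}\varphi(x_z)$. This gives exactly \eqref{eq:phi-transform-density}.

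Both measures live on $\mathfrak T_f$, and the density is strictly positive and finite, since $a\le\varphi\le b$ and $N_z<\infty$. The identity of measures on $\mathfrak T_f$ therefore follows once it holds on each stratum of trees with bounded height and given vertex count. Those strata are countably many and measurable, so the densities glue. The case $\rho=1$ is just substitution.

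\textbf{Main obstacle.} The main difficulty is the bookkeeping that makes the product-of-Janossy-densities description rigorous on $\mathfrak T_f$ under the fixed measurable enumeration. Both laws must be shown to be absolutely continuous with respect to a common reference measure whose vertex factors coincide. If Proposition~\ref{prop:tree-RN} is available, I would avoid repeating this. I would first invoke it to handle the enumeration issue, since it already compares two Poisson Galton--Watson tree laws on finite trees. I would then note that its proof only uses that each vertex's offspring is Poisson with a finite intensity absolutely continuous with respect to $M(x_z,\cdot)$, so it applies verbatim to $u\widehat M$ versus $(u/\rho)M$ with pointwise density $\varphi(y)/\varphi(x_z)$. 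Absolute continuity of $\widehat M(x_z,\cdot)$ with respect to $M(x_z,\cdot)$ holds because $\varphi$ is bounded.
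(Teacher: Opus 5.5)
Your proposal is correct and follows the paper's route. Both compare the Poisson Janossy measures vertex by vertex, which gives the factor $\exp\{u(m(x_z)/\rho-1)\}$ and edge ratios $\varphi(x_w)/\varphi(x_z)$ telescoping to $\varphi(x)^{-1}\prod_{z\in t}\varphi(x_z)^{1-N_z}$, and both obtain the identity of measures on finite trees from the finite-height induction of Proposition~\ref{prop:tree-RN}, with \eqref{eq:PPP-RN} replaced by the general Poisson density $e^{-(\lambda(S)-\nu(S))}\prod_{y\in\eta}\frac{\dd\lambda}{\dd\nu}(y)$ that your Steps~1--2 compute.
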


\begin{proof}
At a vertex $z$, comparison of the Poisson Janossy measures gives the exponential
factor
\(
 \exp\{u(m(x_z)/\rho-1)\}
\)
and, for every edge $(z,w)$, the kernel ratio
$\varphi(x_w)/\varphi(x_z)$.  Multiplying over all vertices and edges yields
\[
 \prod_{(z,w)\in E(t)}\frac{\varphi(x_w)}{\varphi(x_z)}
 =\frac1{\varphi(x)}\prod_{z\in t}\varphi(x_z)^{1-N_z}.
\]
The finite-height induction used in Proposition~\ref{prop:tree-RN} applies
verbatim and proves the equality of measures on all finite trees.  This avoids
point probabilities of exact offspring configurations, which are generally zero
on a continuous type space.
\end{proof}

\subsection{The normalized tree and its ascension process}

In this subsection we write $\lambda$ for the scalar intensity parameter, in
accord with the notation of Aldous and Pitman.
Because $\widehat M(x,S)=1$, the untyped genealogy of $\widehat G_\lambda$ is
the ordinary one-type $\operatorname{PGW}(\lambda)$ tree.  Conditional on that
genealogy and the root type, types propagate along its edges with transition
kernel $\widehat M$, independently for different children given their parent
types.  We use independent atomless auxiliary marks to order siblings and then
forget those marks, so this decoration is independent of the genealogical
ordering.  Thus the Poisson pruning process of
\cite{AldousPitman1998} has a canonical typed lift.

Let $(\widehat G_\lambda,\lambda\geq0)$ denote this increasing tree-valued
Markov process: it has the above marginal at time $\lambda$, and pruning
$\widehat G_\mu$ with retention probability $\lambda/\mu$ gives
$\widehat G_\lambda$.  Its existence and its forward attachment construction
are equation~(69), Proposition~20, and equation~(76) of
\cite{AldousPitman1998}.  In the typed lift, an attachment at a vertex of type
$x$ occurs at total rate one, the new child has law $\widehat M(x,\dd y)$, and
the tree below that child has law $\widehat G_\lambda$ started at $y$.
Write $\bP_x$ for the law of this process with root type $x$.

Write $\widehat T_\lambda$ for the total progeny of $\widehat G_\lambda$ and
define the ascension time and extinction probability by
\begin{equation}\label{eq:ascension-def}
 A:=\inf\{\lambda:\widehat T_\lambda=\infty\},
 \qquad q_\lambda:=\bP_x(\widehat T_\lambda<\infty).
\end{equation}
Since $\widehat M\one=\one$, every particle has
$\operatorname{Poisson}(\lambda)$ total offspring, independently of its type.
Thus the untyped genealogy, and hence its extinction probability $q_\lambda$,
does not depend on $x$.  It is the smallest solution in $[0,1]$ of
\begin{equation}\label{eq:scalar-extinction}
 q_\lambda=e^{-\lambda(1-q_\lambda)}.
\end{equation}
For $\lambda>1$, put $\widehat\lambda=\lambda q_\lambda<1$.

\begin{proposition}[Laws at ascension]\label{prop:ascension-laws}
For $\lambda\geq1$,
\begin{equation}\label{eq:A-distribution}
 \bP_x(A\leq\lambda)=1-q_\lambda,
\end{equation}
and, for $\lambda>1$,
\begin{align}
 \bP_x(\widehat G_{A-}\in\dd t,A\in\dd\lambda)
 &=(1-q_\lambda)T(t)\widehat\bP_{x,\lambda}(\dd t)\dd\lambda,
 \label{eq:A-joint}\\
 \bP_x(\widehat G_{A-}\in\dd t\mid A=\lambda)
 &=(1-\widehat\lambda)T(t)
   \widehat\bP_{x,\widehat\lambda}(\dd t).
 \label{eq:A-conditional}
\end{align}
If $U$ is uniform on $(0,1)$, then
\begin{equation}\label{eq:A-uniform}
 (A,q_A)\stackrel d=
 \left(\frac{-\log U}{1-U},U\right).
\end{equation}
These are the typed versions of Lemma~22, equations~(81)--(84), of
\cite{AldousPitman1998}.
\end{proposition}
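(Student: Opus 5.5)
The plan is to reduce every assertion to the one-type statements of Lemma~22 in \cite{AldousPitman1998}. The reduction uses the fact that the typed process is a decoration of the untyped Poisson pruning process that is conditionally independent of the genealogy. The first step is to make this precise. Let $(\tau_\lambda,\lambda\geq0)$ be the untyped genealogy process of $(\widehat G_\lambda)$, and let $\tau_\infty=\bigcup_\lambda\tau_\lambda$ be its increasing union. I will argue that, conditional on the whole path $(\tau_\lambda)_{\lambda\ge0}$ and the root type $x$, the types on $\tau_\infty$ form a Markov decoration along edges with kernel $\widehat M$. This should follow from the forward attachment construction: each newly attached child receives type law $\widehat M(x,\dd y)$ independently of the attachment times. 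The same should hold for the subtree below it, by induction on the finitely many attachments up to any height and any time.

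Next, since $\widehat M(x,S)=1$, marginalizing the types of vertices outside a rooted subtree $t\subset\tau_\infty$ contributes a factor one for each removed vertex, integrating from the leaves inward. Hence, conditional on the path, the types on any path-measurable rooted subtree are again Markov-decorated by $\widehat M$. The same observation shows that $\widehat\bP_{x,\lambda}(\dd t)$ factorizes as the $\operatorname{PGW}(\lambda)$ law of the shape of $t$ times the $\widehat M$-decoration kernel on $t$.

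Now $A$ is a functional of $(\tau_\lambda)$ alone, and so is the shape of $\widehat G_{A-}=\bigcup_{\lambda<A}\widehat G_\lambda$. Therefore \eqref{eq:A-distribution} and \eqref{eq:A-uniform} are exactly the untyped statements of \cite{AldousPitman1998}, with $q_\lambda$ independent of $x$ as noted after \eqref{eq:scalar-extinction}. For \eqref{eq:A-joint}, I will take the untyped identity
\[
\bP(\tau_{A-}\in\dd s,\ A\in\dd\lambda)=(1-q_\lambda)\,|s|\,\operatorname{PGW}(\lambda)(\dd s)\,\dd\lambda
\]
from Lemma~22 and multiply both sides by the decoration kernel on $s$. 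By the previous step, the left side becomes the typed joint law and the right side becomes $(1-q_\lambda)T(t)\widehat\bP_{x,\lambda}(\dd t)\dd\lambda$, since $T(t)=|s|$ depends only on the shape.

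Formula \eqref{eq:A-conditional} then follows by dividing by the density of $A$ and converting $\operatorname{PGW}(\lambda)$ into $\operatorname{PGW}(\widehat\lambda)$ on finite shapes. For a shape with $n$ vertices, the ratio of the two Poisson tree probabilities is $(\lambda/\widehat\lambda)^{n-1}e^{-(\lambda-\widehat\lambda)n}$. Using $\lambda e^{-\lambda}=\widehat\lambda e^{-\widehat\lambda}$, which is equivalent to \eqref{eq:scalar-extinction}, this ratio equals $\widehat\lambda/\lambda$. The decoration factor is common to both sides, so it passes through unchanged. The remaining scalar identity, namely that $(1-q_\lambda)\widehat\lambda/(\lambda f_A(\lambda))$ equals $1-\widehat\lambda$ where $f_A=\frac{\dd}{\dd\lambda}(1-q_\lambda)$, is obtained by differentiating \eqref{eq:scalar-extinction}. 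This identity is also recorded in \cite{AldousPitman1998}. Because the typed identity holds for all bounded measurable test functions of typed trees, the equality of measures on the standard Borel space $\mathfrak T_f$ follows.

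The main obstacle is the first step: justifying conditional independence of the types from the entire genealogy path, and not merely from its marginal at each fixed time. Conditioning on the random time $A$ involves the whole path, including vertices of $\tau_\infty$ outside $\widehat G_{A-}$. I would first try to build the typed process explicitly from the untyped one. This means attaching an i.i.d.\ family of uniform variables to the vertices of $\mathbb U$, independent of $(\tau_\lambda)$, and generating each child's type from its parent's type through a measurable realization of $\widehat M$, available since $S$ is standard Borel. With this construction the types are a deterministic function of independent randomness and the tree, so the conditional independence is immediate. One then checks that this construction has the required marginals and pruning consistency, which reduces to the sibling-order independence remark preceding the proposition.
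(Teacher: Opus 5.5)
Your proof is correct, but it is organized differently from the paper's. The paper argues directly on the typed process. It reads off \eqref{eq:A-joint} as a hazard statement: each vertex receives attachments at rate one, and an attached tree is infinite with probability $1-q_\lambda$ whatever the type of the attachment vertex, so a finite typed tree $t$ jumps to the infinite state at rate $T(t)(1-q_\lambda)$. It then proves the duality $\widehat\bP^{\mathrm{fin}}_{x,\lambda}=q_\lambda\widehat\bP_{x,\widehat\lambda}$ by tilting each typed Poisson offspring process by $q_\lambda^{\eta(S)}$.

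You instead isolate one structural fact: the types form an $\widehat M$-decoration that is conditionally independent of the entire untyped path, realized explicitly by i.i.d.\ uniforms on $\mathbb U$. You push all of Lemma~22 of \cite{AldousPitman1998} through this fact at once, and you do the duality on untyped shapes via $\lambda e^{-\lambda}=\widehat\lambda e^{-\widehat\lambda}$. The scalar differentiation and the inversion giving \eqref{eq:A-uniform} are the same in both proofs.

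Your route makes explicit why conditioning on the path-dependent time $A$ leaves the type law unchanged. The paper covers that point only by taking the typed forward attachment dynamics as given. The paper's route, on the other hand, does not need the untyped joint law as a black box. Its Poisson tilting is also the same computation reused for the BJR dual kernel \eqref{eq:dual-kernel}, where the extinction probability depends on the type and a shape-level argument would no longer work.

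When you write up your construction, state that the untyped path is embedded in $\mathbb U$ consistently in time, for example by ordering siblings by attachment time. The uniforms on $\mathbb U$ decorate all $\tau_\lambda$ coherently only if $\tau_\lambda\subseteq\tau_\mu$ as subsets of $\mathbb U$ whenever $\lambda<\mu$.
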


\begin{proof}
Monotonicity of the process gives
$\{A\leq\lambda\}=\{\widehat T_\lambda=\infty\}$, proving
\eqref{eq:A-distribution}.  Equation~\eqref{eq:scalar-extinction} follows from
the Poisson generating function and $\widehat M\one=\one$.

The attachment rate at each vertex is one by equation~(76) of
\cite{AldousPitman1998}.  An attached tree is infinite with probability
$1-q_\lambda$, independently of the type of the attachment vertex.  Hence a
finite tree $t$ jumps to the infinite state at rate $T(t)(1-q_\lambda)$; this is
equation~(77) of that paper and yields \eqref{eq:A-joint}.

We spell out the extinction duality at the point where the one-type Poisson
structure of the normalized genealogy is used.  Let $\eta$ be the offspring
point process of a particle of type $x$, so that
$\eta\sim\Pi_{\lambda\widehat M(x,\cdot)}$.  Conditional on $\eta$, extinction
of all descendant bushes has probability $q_\lambda^{\eta(S)}$, since
$q_\lambda$ is independent of the child type.  Thus conditioning on extinction
tilts the offspring law by $q_\lambda^{\eta(S)}$.  Since
$\widehat M(x,S)=1$ and $q_\lambda=e^{-\lambda(1-q_\lambda)}$,
\[
 q_\lambda^{\eta(S)}
 \Pi_{\lambda\widehat M(x,\cdot)}(\dd\eta)
 =q_\lambda\,
 \Pi_{\lambda q_\lambda\widehat M(x,\cdot)}(\dd\eta).
\]
After division by the normalizing probability $q_\lambda$, the conditional
offspring process is Poisson with intensity
$\lambda q_\lambda\widehat M=\widehat\lambda\widehat M$.  Applying the same
argument recursively to every descendant shows that the entire tree
conditioned on extinction has law $\widehat G_{\widehat\lambda}$.  Consequently,
on finite trees,
\begin{equation}\label{eq:normalized-duality}
 \widehat\bP_{x,\lambda}^{\mathrm{fin}}(\dd t)
 =q_\lambda\widehat\bP_{x,\widehat\lambda}(\dd t).
\end{equation}
Differentiating \eqref{eq:scalar-extinction} gives
\(
 -q_\lambda'=q_\lambda(1-q_\lambda)/(1-\widehat\lambda)
\).
Divide \eqref{eq:A-joint} by this density and use
\eqref{eq:normalized-duality} to obtain \eqref{eq:A-conditional}.
Finally, inversion of $1-q_\lambda$ gives \eqref{eq:A-uniform}, exactly as in
Lemma~22 of \cite{AldousPitman1998}.
\end{proof}

Let $\widehat G_1^\infty$ be the spinal tree for the Markov kernel
$\widehat M$, and let $(\widehat G_s^*,0\leq s\leq1)$ be its uniform pruning,
including pruning of the spine.  Corollary~\ref{cor:terminal-spine} with
$\rho=1$ and $\varphi=1$ gives
\begin{equation}\label{eq:normalized-size-bias}
 \widehat\bP_{x,s}^{*}(\dd t)
 =(1-s)T(t)\widehat\bP_{x,s}(\dd t),\qquad 0\leq s<1.
\end{equation}
This is the typed extension of Corollary~24, equation~(86), of
\cite{AldousPitman1998}.

\begin{proposition}[Aldous--Pitman representation]\label{prop:AP-representation}
Let $U$ be uniform on $(0,1)$ and independent of
$(\widehat G_s^*,0\leq s\leq1)$.  Then
\begin{equation}\label{eq:AP-path-representation}
 (\widehat G_\lambda,0\leq\lambda<A)
 \stackrel d=
 \left(\widehat G_{\lambda U}^{*},
       0\leq\lambda<\frac{-\log U}{1-U}\right).
\end{equation}
Consequently, for every fixed $0<u\leq1$,
\begin{equation}\label{eq:AP-one-time}
 \widehat G_u\stackrel d=\widehat G_{uU}^{*}.
\end{equation}
\end{proposition}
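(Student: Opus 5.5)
The path identity \eqref{eq:AP-path-representation} is the typed version of the Aldous--Pitman ascension representation, so the plan is to lift their untyped theorem through the canonical typed decoration described above. I then prove \eqref{eq:AP-one-time} directly from the earlier marginal formulas, so the one-time statement does not depend on the lifting step.

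\textbf{Path identity.} Both processes are functionals of an untyped one-type Poisson genealogy together with types. On the left, the untyped genealogy of $(\widehat G_\lambda)$ is the Aldous--Pitman pruning process. The types are obtained by running the Markov kernel $\widehat M$ along edges from the root type $x$, independently of the genealogy. On the right, the spinal tree $\widehat G_1^\infty$ for the Markov kernel $\widehat M$ has untyped genealogy equal to the one-type size-biased $\operatorname{PGW}(1)$ tree, because $\widehat M\one=\one$. In that tree the spine child is added with law $\widehat M(x,\cdot)$ and ordinary children are Poisson$(\widehat M(x,\cdot))$. So given the genealogy, types again propagate along edges by $\widehat M$, independently of the genealogy and of the edge marks $U_e$. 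The pruning uses only the edge marks, so the pruned processes inherit the same conditional typing. Aldous--Pitman's path representation (their Theorem following Lemma~22 and Corollary~24) gives equality in law of the untyped processes. Attaching the conditionally independent $\widehat M$-Markov typing on each side then gives \eqref{eq:AP-path-representation}. The identification $A=(-\log U)/(1-U)$ is consistent with \eqref{eq:A-uniform}.

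\textbf{One-time identity.} For $0<u\le1$, note that $(-\log U)/(1-U)>1\ge u$ almost surely. Hence $\{u<A\}$ has full probability; this also follows from \eqref{eq:A-distribution} together with $q_\lambda=1$ for $\lambda\le1$. Evaluating at $\lambda=u$ gives \eqref{eq:AP-one-time}. As a self-contained check, condition on $U=r$ and use \eqref{eq:normalized-size-bias}. The right side of \eqref{eq:AP-one-time} then has law
\[
\int_0^1(1-ur)T(t)\,\widehat\bP_{x,ur}(\dd t)\,\dd r .
\]
By Proposition~\ref{prop:phi-transform} with $\rho=1,\varphi=1$, one has
$\widehat\bP_{x,s}(\dd t)=(s/u)^{T(t)-1}e^{-(s-u)T(t)}\widehat\bP_{x,u}(\dd t)$. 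The integral therefore becomes $\widehat\bP_{x,u}(\dd t)$ times a scalar function of $n=T(t)$. After substituting $s=ur$, that scalar is
\[
n\int_0^1(1-ur)r^{n-1}e^{un(1-r)}\,\dd r
=\Bigl[r^n e^{un(1-r)}\Bigr]_{0}^{1}=1 ,
\]
since $\frac{\dd}{\dd r}\bigl(r^ne^{un(1-r)}\bigr)=n r^{n-1}(1-ur)e^{un(1-r)}$. Both sides are carried by finite trees, because $\widehat G_u$ is finite almost surely for $u\le1$. Hence the laws agree.

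\textbf{Main obstacle.} The hard part is the path-level statement: justifying that the typed decoration commutes with the Aldous--Pitman coupling. This requires showing that in their construction of $\widehat G^*_{\lambda U}$ the edge marks and sibling orderings are independent of the types. I would handle it by constructing both sides on a common untyped probability space and attaching types by a measurable kernel of the genealogy. Equality in law of the typed processes then follows from the untyped equality via a disintegration argument.
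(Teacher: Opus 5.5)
Your path-level argument is the paper's: quote the untyped Aldous--Pitman representation (the paper cites their Proposition~26, equation~(91)) and lift it. The lift uses that, on both sides, types are a parameter-independent $\widehat M$-Markov decoration of the untyped genealogy, and edge pruning preserves this decoration. Your first derivation of \eqref{eq:AP-one-time}, evaluating at $\lambda=u\le1<A$, is also what the paper does (their equation~(92)).

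Your additional ``self-contained check'' is a genuinely different and correct route to \eqref{eq:AP-one-time}. Mixing \eqref{eq:normalized-size-bias} over $s=ur$ and rewriting $\widehat\bP_{x,s}$ in terms of $\widehat\bP_{x,u}$ reduces everything to the exact derivative $\frac{\dd}{\dd r}\bigl(r^ne^{un(1-r)}\bigr)=nr^{n-1}(1-ur)e^{un(1-r)}$. Fix one citation: the cross-parameter density $(s/u)^{T(t)-1}e^{-(s-u)T(t)}$ is Proposition~\ref{prop:tree-RN} applied to the Markov kernel $\widehat M$, where $\sum_{z\in t}\widehat M(x_z,S)=T(t)$. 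It does not come from Proposition~\ref{prop:phi-transform} with $\rho=1,\varphi=1$, which compares two laws at the \emph{same} parameter and gives density one.

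What your route buys is that the one-time identity needs no ascension theory, only Corollary~\ref{cor:terminal-spine} and the finite-tree Radon--Nikodym identity. This is the only part used later, in \eqref{eq:AP-integral-identity} and Proposition~\ref{prop:pullback-AP}. Your computation is the $m\equiv1$ case of \eqref{eq:weak-integral}, where $T-sA$ becomes $(1-s)T$, verified at tree level rather than on $\cN_f(S)$.

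The price is that it reverses the logic of Subsection~\ref{subsec:pullback}. If \eqref{eq:AP-one-time} is proved this way, Corollary~\ref{cor:spinal-dynamics} is no longer an independent structural derivation of the dynamics, since both rest on the same Radon--Nikodym computation. The path statement \eqref{eq:AP-path-representation} still needs the Aldous--Pitman input, exactly as in the paper.
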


\begin{proof}
After types are forgotten, \eqref{eq:AP-path-representation} is Proposition~26,
equation~(91), of \cite{AldousPitman1998}; its one-time consequence is their
equation~(92).  Conditional on every finite untyped genealogy, the type law is
obtained by putting the fixed root type $x$ at the root and applying
$\widehat M$ independently along its directed edges.  This conditional marking
law is independent of the parameter and is preserved when edges are pruned.
It therefore gives the same conditional typed law on both sides of the untyped
process identity, proving the typed lift.
\end{proof}

Combining \eqref{eq:AP-one-time} with
\eqref{eq:normalized-size-bias} gives, for every measurable set $B$ of finite
typed trees,
\begin{equation}\label{eq:AP-integral-identity}
 \widehat\bP_{x,u}(B)
 =\frac1u\int_0^u\widehat\bP_{x,s}^{*}(B)\dd s
 =\frac1u\int_0^u\int_B(1-s)T(t)
       \widehat\bP_{x,s}(\dd t)\dd s.
\end{equation}
Since $\widehat M\one=\one$, equation \eqref{eq:AP-integral-identity} is the
normalized form of the integral dynamical equation.  We next transport it back
to the original kernel.  This gives a second derivation of the governing
equations; Section~\ref{sec:dynamics} gives the direct proof without
Assumption~\ref{ass:eigenfunction}.

\subsection{Pulling the normalized dynamics back to the original kernel}
\label{subsec:pullback}

For a finite typed tree $t$, write
\[
 T(t):=|t|,
 \qquad
 A(t):=\sum_{z\in t}m(x_z).
\]
Proposition~\ref{prop:phi-transform}, applied at parameter $\rho r$, gives
\begin{equation}\label{eq:phi-transform-rho-r}
 \widehat\bP_{x,\rho r}^{\mathrm{fin}}(\dd t)
 =H_{x,r}(t)\,\bP_{x,r}^{\mathrm{fin}}(\dd t),
\end{equation}
where
\begin{equation}\label{eq:pullback-H}
 H_{x,r}(t)
 :=C_x(t)\exp\{r(A(t)-\rho T(t))\},
 \qquad
 C_x(t):=\frac1{\varphi(x)}
          \prod_{z\in t}\varphi(x_z)^{1-N_z}.
\end{equation}
The factor $C_x(t)$ is independent of the intensity parameter $r$.

\begin{proposition}[Pull-back of the Aldous--Pitman identity]
\label{prop:pullback-AP}
Under Assumption~\ref{ass:eigenfunction}, for $0<u\leq1/\rho$ the finite-tree
laws satisfy
\begin{equation}\label{eq:pullback-tree-volterra}
 \bP_{x,u}^{\mathrm{fin}}(\dd t)
 =\frac1u\int_0^u
 T(t)(1-\rho s)
 \exp\{(s-u)(A(t)-\rho T(t))\}
 \bP_{x,s}^{\mathrm{fin}}(\dd t)\dd s.
\end{equation}
Consequently, for every root law $\alpha$, the finite total-progeny laws obey
\begin{equation}\label{eq:pullback-measure-volterra}
 P_{\alpha,u}^{\Lambda}(\dd\mu)
 =\frac1u\int_0^u T(\mu)(1-\rho s)
 \exp\{(s-u)(A(\mu)-\rho T(\mu))\}
 P_{\alpha,s}^{\Lambda}(\dd\mu)\dd s,
\end{equation}
where $T(\mu)=\mu(S)$ and $A(\mu)=\ip{\mu}{M\one}$.
\end{proposition}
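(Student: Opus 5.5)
We combine the normalized Aldous--Pitman identity \eqref{eq:AP-integral-identity} with the change of measure \eqref{eq:phi-transform-rho-r}. The parameter is rescaled so that the normalized parameter $\lambda$ corresponds to $\lambda/\rho$ in the original kernel.

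\textbf{Step 1: rescale the normalized identity.} Fix $0<u\le 1/\rho$ and set $\lambda=\rho u\le1$. Identity \eqref{eq:AP-integral-identity} at parameter $\lambda$ reads, as measures on finite typed trees,
\[
 \widehat\bP_{x,\lambda}(\dd t)=\frac1\lambda\int_0^\lambda(1-s')T(t)\,\widehat\bP_{x,s'}(\dd t)\dd s'.
\]
For $s'<\lambda\le1$ the normalized tree is a.s.\ finite, so $\widehat\bP_{x,s'}=\widehat\bP^{\mathrm{fin}}_{x,s'}$. When $\lambda=1$, the left side is also a.s.\ finite, since the critical Poisson tree dies out. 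Substitute $s'=\rho s$, so that $\dd s'=\rho\,\dd s$ and $s\in(0,u)$. Then use \eqref{eq:phi-transform-rho-r} on both sides: $\widehat\bP^{\mathrm{fin}}_{x,\rho u}=H_{x,u}\bP^{\mathrm{fin}}_{x,u}$ and $\widehat\bP^{\mathrm{fin}}_{x,\rho s}=H_{x,s}\bP^{\mathrm{fin}}_{x,s}$. This gives
\[
 H_{x,u}(t)\bP^{\mathrm{fin}}_{x,u}(\dd t)=\frac1{u}\int_0^u(1-\rho s)T(t)H_{x,s}(t)\bP^{\mathrm{fin}}_{x,s}(\dd t)\dd s.
\]

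\textbf{Step 2: cancel the density.} By \eqref{eq:pullback-H}, $H_{x,r}(t)=C_x(t)e^{r(A(t)-\rho T(t))}$, and this is strictly positive and finite because $\varphi\in[a,b]$ and $t$ is finite. Multiplying both sides by $1/H_{x,u}(t)$ is therefore legitimate as a measure identity; it amounts to integrating a nonnegative function against both sides. The factor $C_x(t)$ does not depend on the parameter, so it cancels. What remains is $e^{(s-u)(A(t)-\rho T(t))}$, which is exactly \eqref{eq:pullback-tree-volterra}. Strictly, this step should be done by testing against nonnegative measurable $F$ and applying Tonelli to swap $\int\dd s$ with the tree integral. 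The joint measurability in $(s,t)$ of the integrand follows from the explicit densities.

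\textbf{Step 3: project to total progeny.} Every factor in the integrand, namely $T(t)$, $A(t)=\ip{\Lambda_t}{M\one}$ and $(1-\rho s)$, is a measurable function of $\Lambda_t$, so the right side of \eqref{eq:pullback-tree-volterra} pushes forward under $t\mapsto\Lambda_t$. Take $B=\{t:\Lambda_t\in E\}$ and use Tonelli to obtain the identity for $P^\Lambda_{x,u}$. Then integrate over $x$ against $\alpha$, again by Tonelli, since everything is nonnegative. This gives \eqref{eq:pullback-measure-volterra}.

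\textbf{Main obstacle.} The delicate point is the endpoint bookkeeping in Step 1. We must check that \eqref{eq:AP-integral-identity} holds as stated up to $u=1$ in normalized units, which is the reason for the constraint $u\le1/\rho$. We also need the finite-restriction versions on both sides to match, so that no mass on infinite trees is lost. I would verify that Proposition~\ref{prop:AP-representation} covers $u=1$, where $\widehat G_1$ is critical and hence finite almost surely. Beyond that, the argument is a cancellation of explicit positive densities.
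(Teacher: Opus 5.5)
Your proposal is correct and follows the paper's proof essentially line by line. You apply \eqref{eq:AP-integral-identity} at $\rho u\le1$ with $r=\rho s$, substitute \eqref{eq:phi-transform-rho-r} at parameters $u$ and $s$ so that $C_x(t)$ cancels and $H_{x,s}/H_{x,u}=e^{(s-u)(A-\rho T)}$, and then project under $t\mapsto\Lambda_t$ and integrate over $\alpha$. Your endpoint concern is already covered, since \eqref{eq:AP-integral-identity} is stated for measurable sets of finite trees and for all $0<u\le1$ (its integrand involves only $s<1$), so the extra a.s.-finiteness check is harmless but not needed.
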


\begin{proof}
Apply \eqref{eq:AP-integral-identity} at the normalized parameter
$\rho u\leq1$:
\[
 \widehat\bP_{x,\rho u}^{\mathrm{fin}}(\dd t)
 =\frac1{\rho u}\int_0^{\rho u}
 (1-r)T(t)\widehat\bP_{x,r}^{\mathrm{fin}}(\dd t)\dd r.
\]
With $r=\rho s$, this becomes
\[
 \widehat\bP_{x,\rho u}^{\mathrm{fin}}(\dd t)
 =\frac1u\int_0^u
 (1-\rho s)T(t)\widehat\bP_{x,\rho s}^{\mathrm{fin}}(\dd t)\dd s.
\]
Substitute \eqref{eq:phi-transform-rho-r} at parameters $u$ and $s$.  The
factor $C_x(t)$ cancels, and
\[
 \frac{H_{x,s}(t)}{H_{x,u}(t)}
 =\exp\{(s-u)(A(t)-\rho T(t))\},
\]
which proves \eqref{eq:pullback-tree-volterra}.  All remaining factors depend
on $t$ only through $\Lambda_t$.  Projection under
$t\mapsto\Lambda_t$ therefore proves \eqref{eq:pullback-measure-volterra} for
a fixed root type; integration with respect to $\alpha$ proves it for an
arbitrary root law.
\end{proof}

\begin{corollary}[Spinal derivation of the governing equations]
\label{cor:spinal-dynamics}
Under Assumption~\ref{ass:eigenfunction}, let
$F:\cN_f(S)\to\mathbb R$ be bounded and supported on $\cN_N(S)$ for some
$N$.  For $0<u<1/\rho$,
\begin{equation}\label{eq:spinal-weak-ODE}
 \frac{\dd}{\dd u}\int F(\mu)P_{\alpha,u}^{\Lambda}(\dd\mu)
 =\int F(\mu)\left\{\frac{T(\mu)-1}{u}-A(\mu)\right\}
 P_{\alpha,u}^{\Lambda}(\dd\mu),
\end{equation}
and
\begin{equation}\label{eq:spinal-weak-integral}
 \int F(\mu)P_{\alpha,u}^{\Lambda}(\dd\mu)
 =\frac1u\int_0^u\int
 F(\mu)\{T(\mu)-sA(\mu)\}
 P_{\alpha,s}^{\Lambda}(\dd\mu)\dd s.
\end{equation}
Thus the $\varphi$-transform and ascension construction recover the same
dynamical equations as Theorem~\ref{thm:dynamics} on the spectrally
subcritical side.
\end{corollary}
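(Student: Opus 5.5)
The plan is to derive both identities from the Volterra equation \eqref{eq:pullback-measure-volterra} of Proposition~\ref{prop:pullback-AP}, working on a single stratum $\cN_N(S)$. Fix $F$ bounded and supported on $\cN_N(S)$, and write $c(\mu):=A(\mu)-\rho T(\mu)$. On the support of $F$ we have $T\le N$ and $0\le A\le DN$, with $D$ from \eqref{eq:bounded-kernel}. So every weight of the form $T^j A^k e^{wc}$, with $w$ in a bounded interval, is bounded there uniformly in $w$.

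\emph{Step 1 (regularity).} Let $G$ be bounded and supported on $\cN_N(S)$. Multiply \eqref{eq:pullback-measure-volterra} by $u\,G(\mu)e^{uc(\mu)}$ and integrate in $\mu$. The factors $e^{-uc}$ and $e^{uc}$ cancel, which gives
\[
 u\int G e^{uc}\,\dd P_{\alpha,u}^{\Lambda}
 =\int_0^u\int G\,T\,(1-\rho s)e^{sc}\,\dd P_{\alpha,s}^{\Lambda}\,\dd s .
\]
Fubini applies because the integrand is bounded by $N\|G\|_\infty(1+\rho u)e^{uDN}$ and $P_{\alpha,s}^\Lambda$ has mass at most one. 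The right-hand side is the integral of a bounded measurable function of $s$, so it is continuous in $u$. Applying the same identity with $G$ replaced by $Ge^{-uc}$ (or by $GTe^{-sc}$) shows that $u\mapsto\int G\,\dd P_{\alpha,u}^\Lambda$ is continuous on $(0,1/\rho)$ for every such $G$. Joint continuity in the weight parameter, uniform on the stratum, then makes the inner integrand continuous in $s$. By the fundamental theorem of calculus, $\Phi_G(u):=u\int Ge^{uc}\,\dd P_{\alpha,u}^\Lambda$ is $C^1$ with
\[
\Phi_G'(u)=\int G\,T(1-\rho u)e^{uc}\,\dd P_{\alpha,u}^\Lambda .
\]

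\emph{Step 2 (the ODE).} Fix $u_0\in(0,1/\rho)$ and take $G=Fe^{-u_0c}$. Then $H(u):=\int Fe^{(u-u_0)c}\,\dd P_{\alpha,u}^\Lambda=\Phi_G(u)/u$ is differentiable at $u_0$, with
\[
H'(u_0)=\int F\,\frac{T(1-\rho u_0)-1}{u_0}\,\dd P_{\alpha,u_0}^\Lambda .
\]
Next write $f(u)=\int F\,\dd P_{\alpha,u}^\Lambda=\int Fe^{-(u-u_0)c}e^{(u-u_0)c}\,\dd P_{\alpha,u}^\Lambda$. Expand $e^{-(u-u_0)c}=1-(u-u_0)c+O((u-u_0)^2)$, where the remainder is uniform on the stratum. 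Using the continuity from Step~1 for the term $\int Fc\,e^{(u-u_0)c}\,\dd P_{\alpha,u}^\Lambda$, this gives
\[
f'(u_0)=H'(u_0)-\int Fc\,\dd P_{\alpha,u_0}^\Lambda .
\]
The $\rho T$ terms cancel, and what remains is exactly the integrand $\frac{T-1}{u_0}-A$ of \eqref{eq:spinal-weak-ODE}.

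\emph{Step 3 (the integral form).} From Step~2, $\frac{\dd}{\dd u}(uf(u))=f+uf'=\int F(T-uA)\,\dd P_{\alpha,u}^\Lambda$, and this derivative is continuous in $u$ by Step~1. Since $|f|\le\|F\|_\infty$, we have $uf(u)\to0$ as $u\downarrow0$. Integrating from $0$ to $u$ yields \eqref{eq:spinal-weak-integral}. As a consistency check, at $s=0$ the measure $P_{\alpha,0}^\Lambda$ is $\delta$ of the root, and the integrand stays bounded near $s=0$.

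The main obstacle is Step~1: \eqref{eq:pullback-measure-volterra} is only an identity of measures, so differentiability in $u$ of the projected laws must be extracted from it. The plan handles this by bootstrapping continuity from the Volterra form itself and relying on the uniform boundedness of $T$, $A$ and the exponential weights on a fixed stratum. Restricting to bounded $F$ supported on some $\cN_N(S)$ is what makes every exchange of limits trivial.
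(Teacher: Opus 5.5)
Your argument is correct. It starts where the paper does, from the pulled-back Volterra identity \eqref{eq:pullback-measure-volterra} tested against $F$ on a stratum, and it ends the same way: the $\rho T$ terms cancel, and $\frac{\dd}{\dd u}\{uJ_F(u)\}$ is integrated from $0$, where $J_F(u)=\int F\,\dd P_{\alpha,u}^{\Lambda}$. The two proofs differ in how the derivative is obtained.

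\textbf{The paper's route.} The paper keeps the $u$-dependent weight $e^{(s-u)B(\mu)}$, with $B=A-\rho T$ (your $c$). It applies Leibniz' rule directly to $uJ_F(u)=\int_0^u\int FT(1-\rho s)e^{(s-u)B}\,\dd P_{\alpha,s}^{\Lambda}\,\dd s$. This produces the boundary term $\int FT(1-\rho u)\,\dd P_{\alpha,u}^{\Lambda}$ and an interior term. The paper identifies the interior term as $u\int FB\,\dd P_{\alpha,u}^{\Lambda}$ by applying the Volterra identity a second time, to the test function $FB$.

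\textbf{Your route.} You multiply by the tilt $e^{uc}$, which removes $u$ from the weight and turns the identity into a plain integral $\int_0^u h_G(s)\,\dd s$. From that identity you bootstrap continuity of $s\mapsto\int G\,\dd P_{\alpha,s}^{\Lambda}$ on strata, and then apply the fundamental theorem of calculus. Finally you undo the tilt at a fixed $u_0$ by a first-order expansion of $e^{-(u-u_0)c}$. The resulting term $-\int Fc\,\dd P_{\alpha,u_0}^{\Lambda}$ plays the role of the paper's second application of the Volterra identity.

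\textbf{What each buys.} Your version makes explicit the continuity in $s$ needed to get the Leibniz boundary term at every $u$. The paper justifies that step only by uniform boundedness of the integrand and its $u$-derivative. Boundedness alone gives the boundary term only at Lebesgue points, so continuity is left implicit there. The paper's route is shorter and needs neither the tilt nor a Taylor remainder.

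\textbf{One phrasing fix.} ``Replace $G$ by $Ge^{-uc}$'' does not literally make sense while $u$ varies. The correct version is what you indicate afterwards: fix $u_0$, use the tilted identity with the fixed function $Ge^{-u_0c}$, and control $e^{-uc}-e^{-u_0c}$ uniformly on $\cN_N(S)$. This works because $|c|\leq(D+\rho)N$ there and all the measures have mass at most one.
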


\begin{proof}
Put
\[
 J_F(u):=\int F(\mu)P_{\alpha,u}^{\Lambda}(\dd\mu),
 \qquad
 B(\mu):=A(\mu)-\rho T(\mu).
\]
Multiplying \eqref{eq:pullback-measure-volterra} by $F(\mu)$ and integrating
gives
\begin{equation}\label{eq:pullback-J}
 uJ_F(u)
 =\int_0^u\int
 F(\mu)T(\mu)(1-\rho s)e^{(s-u)B(\mu)}
 P_{\alpha,s}^{\Lambda}(\dd\mu)\dd s.
\end{equation}
Because $F$ is supported on $\cN_N(S)$ and
$A(\mu)\leq DT(\mu)\leq DN$ there, the integrand and its derivative in $u$
are uniformly bounded when $u$ ranges over a compact subinterval of
$(0,1/\rho)$.  Leibniz' rule therefore applies and gives
\begin{align*}
 J_F(u)+uJ_F'(u)
 &=\int F(\mu)T(\mu)(1-\rho u)
       P_{\alpha,u}^{\Lambda}(\dd\mu)\\
 &\quad-\int_0^u\int
 F(\mu)T(\mu)(1-\rho s)B(\mu)e^{(s-u)B(\mu)}
 P_{\alpha,s}^{\Lambda}(\dd\mu)\dd s.
\end{align*}
Applying \eqref{eq:pullback-measure-volterra} with the bounded test function
$F(\mu)B(\mu)$ identifies the second integral as
\[
 u\int F(\mu)B(\mu)P_{\alpha,u}^{\Lambda}(\dd\mu).
\]
It follows that
\begin{align*}
 J_F(u)+uJ_F'(u)
 &=\int F(\mu)
 \{T(\mu)(1-\rho u)-u(A(\mu)-\rho T(\mu))\}
 P_{\alpha,u}^{\Lambda}(\dd\mu)\\
 &=\int F(\mu)\{T(\mu)-uA(\mu)\}
 P_{\alpha,u}^{\Lambda}(\dd\mu),
\end{align*}
which is \eqref{eq:spinal-weak-ODE}.  Equivalently,
\[
 \frac{\dd}{\dd u}\{uJ_F(u)\}
 =\int F(\mu)\{T(\mu)-uA(\mu)\}
 P_{\alpha,u}^{\Lambda}(\dd\mu).
\]
Since $|uJ_F(u)|\leq u\|F\|_\infty$, integration from zero to $u$ proves
\eqref{eq:spinal-weak-integral}.
\end{proof}

\begin{remark}
The restriction $u\leq1/\rho$ in Proposition~\ref{prop:pullback-AP} comes only
from the range $\rho u\leq1$ of the Aldous--Pitman one-time identity.  The
direct Radon--Nikodym argument in Section~\ref{sec:dynamics} removes both this
restriction and Assumption~\ref{ass:eigenfunction}: locally on finite-mass
strata, the same dynamical equations hold for every $u>0$.
\end{remark}

\section{Change of measure and governing equations}\label{sec:dynamics}

This section gives the main Radon--Nikodym argument directly; it does not use
Assumption~\ref{ass:eigenfunction} or any result from Section~\ref{sec:spine}.
The Poisson nature of the offspring distribution makes the change of parameter
particularly transparent.  Recall that if $\Pi_{u\xi}$ and $\Pi_{v\xi}$ are the
laws of Poisson point processes with finite intensity measures $u\xi$ and
$v\xi$, then, for $u,v>0$,
\begin{equation}\label{eq:PPP-RN}
 \frac{\dd\Pi_{u\xi}}{\dd\Pi_{v\xi}}(\eta)
 =\exp\{-(u-v)\xi(S)\}\left(\frac uv\right)^{\eta(S)}.
\end{equation}
Indeed, on the stratum $\{\eta(S)=k\}$ the Janossy measure is
$e^{-u\xi(S)}u^k\xi^{\otimes k}/k!$; comparison with the corresponding measure
at $v$ proves \eqref{eq:PPP-RN}.  This is also an immediate consequence of the
Poisson Janossy formula in Example~4.8, equation~(4.21), of
\cite{LastPenrose2018}.

\begin{proposition}[Finite-tree Radon--Nikodym derivative]\label{prop:tree-RN}
For $u,v>0$, the finite-tree subprobability measures defined in
Section~\ref{sec:model} satisfy
\begin{equation}\label{eq:tree-RN}
 \frac{\dd\bP_{\alpha,u}^{\mathrm{fin}}}
      {\dd\bP_{\alpha,v}^{\mathrm{fin}}}(t)
 =\left(\frac uv\right)^{|t|-1}
   \exp\left\{-(u-v)\sum_{z\in t}m(x_z)\right\}.
\end{equation}
The identity holds for every root law $\alpha$.
\end{proposition}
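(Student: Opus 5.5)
We prove the identity by induction on height, in the form of an equality of measures. It suffices to treat a fixed root type $x$, since both sides are then integrated against $\alpha(\dd x)$ and the density in \eqref{eq:tree-RN} does not involve the root type separately. Write $R_{u,v}(t)$ for the right-hand side of \eqref{eq:tree-RN}. Our goal is
\[
 \bP_{x,u}^{\mathrm{fin}}(B)=\int_B R_{u,v}(t)\,\bP_{x,v}^{\mathrm{fin}}(\dd t)
\]
for every measurable $B\subset\mathfrak T_f$.

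Let $\mathfrak T_h$ be the measurable set of finite typed trees of height at most $h$. Then $\mathfrak T_f=\bigcup_h\mathfrak T_h$ increasingly, and monotone convergence reduces the claim to trees of height at most $h$ for each $h$. By a monotone class argument it is enough to test against nonnegative measurable functionals $\Phi$ on $\mathfrak T_h$.

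For $h=0$, the tree is the root alone. We have $\bP_{x,u}(\text{no children})=e^{-um(x)}$, and $R_{u,v}=e^{-(u-v)m(x)}$, which agrees.

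For the inductive step, decompose a tree of height at most $h+1$ into two pieces:
\begin{itemize}
 \item the root offspring configuration $\eta$, a Poisson process with intensity $uM(x,\cdot)$;
 \item the independent subtrees rooted at the atoms of $\eta$, each of height at most $h$.
\end{itemize}
Conditional on $\eta=\sum_{i}\delta_{y_i}$, the subtree laws are products of $\bP_{y_i,u}$ restricted to $\mathfrak T_h$. By the induction hypothesis, each factor has density $R_{u,v}(t_i)$ relative to $\bP_{y_i,v}$. By \eqref{eq:PPP-RN} applied with $\xi=M(x,\cdot)$, the offspring configuration has density
\[
 e^{-(u-v)m(x)}(u/v)^{\eta(S)}.
\]
Multiplying these factors gives
\[
 \left(\frac uv\right)^{k+\sum_i(|t_i|-1)}
 \exp\Big\{-(u-v)\Big(m(x)+\sum_i\sum_{z\in t_i}m(x_z)\Big)\Big\}.
\]
Since $|t|-1=k+\sum_i(|t_i|-1)$ with $k=\eta(S)$, this is exactly $R_{u,v}(t)$.

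The main obstacle is measure-theoretic bookkeeping rather than the algebra. We must combine densities for the offspring process and for the conditionally independent subtrees into one density on the tree space. We also cannot use point probabilities, since these vanish for continuous types. We handle this by working on each stratum $\{\eta(S)=k\}$ through Janossy measures. The joint law there is
\[
 \frac1{k!}\int\cdots\int \Phi(\cdot)\prod_i \bP_{y_i,u}(\dd t_i)\,e^{-um(x)}u^k M(x,\dd y_1)\cdots M(x,\dd y_k),
\]
and the induction hypothesis, applied inside the integral via Fubini (all integrands nonnegative), replaces each $\bP_{y_i,u}$ by $R_{u,v}(t_i)\bP_{y_i,v}$. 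The fixed measurable enumeration of offspring configurations into $\mathbb U$ is applied identically under $u$ and $v$. It therefore does not affect the comparison, and the density depends only on $|t|$ and $\sum_z m(x_z)$, which are enumeration-invariant.

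Finiteness of $m(x)$ at each type, rather than the uniform bound \eqref{eq:bounded-kernel}, is all that is used. It is needed so that \eqref{eq:PPP-RN} applies at each vertex.
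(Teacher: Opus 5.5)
Your proof is correct and follows the paper's argument. Both proceed by induction on height over strata of complete trees, multiply the root-offspring density from \eqref{eq:PPP-RN} by the inductive densities of the conditionally independent subtrees, use $\sum_{z\in t}N_z=|t|-1$, and pass to all finite trees by monotone convergence; your explicit Janossy (marked Poisson) bookkeeping spells out what the paper summarizes as ``an induction for finite kernels of measures.''
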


\begin{proof}
Let $\mathsf T_h$ be the measurable stratum of complete trees of height at most
$h$; ``complete'' means that the zero-offspring event is recorded for every
leaf.  We prove the result for the restrictions of the two tree measures to
$\mathsf T_h$ by induction on $h$.  For $h=0$, the root has no children, and
\eqref{eq:PPP-RN} gives the factor $e^{-(u-v)m(x_\varnothing)}$.

For the induction step, first expose the root's finite offspring point measure.
Its Radon--Nikodym derivative is
\[
 e^{-(u-v)m(x_\varnothing)}(u/v)^{N_\varnothing}.
\]
Conditional on the offspring types, the descendant trees are independent, and
their restrictions to $\mathsf T_{h-1}$ have the Radon--Nikodym derivatives supplied by
the induction hypothesis.  Multiplication gives
\[
 \prod_{z\in t}e^{-(u-v)m(x_z)}(u/v)^{N_z}.
\]
This is an induction for finite kernels of measures, so it proves the
Radon--Nikodym identity on the whole stratum, not merely on cylinder events.
Since $\sum_{z\in t}N_z=|t|-1$, the displayed product equals
\eqref{eq:tree-RN}.  Finally, the finite-tree space is the increasing union
$\bigcup_{h\geq0}\mathsf T_h$; applying the stratum identity to
$B\cap\mathsf T_h$ and then using monotone convergence proves it on every
measurable set $B$ of finite trees.  The common root law contributes no factor.
\end{proof}

For $\mu\in\cN_f(S)$ define
\[
 T(\mu):=\mu(S),\qquad A(\mu):=\ip{\mu}{M\one}
 =\int_Sm(x)\mu(\dd x).
\]

\begin{theorem}[Total-progeny Radon--Nikodym derivative]\label{thm:measure-RN}
For every $u,v>0$,
\begin{equation}\label{eq:measure-RN}
 \frac{\dd P_{\alpha,u}^{\Lambda}}
      {\dd P_{\alpha,v}^{\Lambda}}(\mu)
 =\left(\frac uv\right)^{T(\mu)-1}
   e^{-(u-v)A(\mu)},
 \qquad P_{\alpha,v}^{\Lambda}\text{-a.e. }\mu.
\end{equation}
In particular, all finite-total-progeny laws with positive parameters are
mutually absolutely continuous.
\end{theorem}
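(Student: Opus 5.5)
The proof is a pushforward of Proposition~\ref{prop:tree-RN} under the measurable projection $\pi:\mathfrak T_f\to\cN_f(S)$, $t\mapsto\Lambda_t$. By the definition of $P_{\alpha,u}^{\Lambda}$ as the restriction of the law of $\Lambda_u$ to finite measures, we have $P_{\alpha,u}^{\Lambda}=\bP_{\alpha,u}^{\mathrm{fin}}\circ\pi^{-1}$. The finite total-progeny event coincides with the finite-tree event, since $T_u=|G_u|$. Measurability of $\pi$ was already established in Section~\ref{sec:model}.

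The key observation is that the density in \eqref{eq:tree-RN} factors through $\pi$. We have $|t|=\Lambda_t(S)=T(\Lambda_t)$ and $\sum_{z\in t}m(x_z)=\ip{\Lambda_t}{m}=A(\Lambda_t)$. Set
\[
 g(\mu):=\left(\frac uv\right)^{T(\mu)-1}e^{-(u-v)A(\mu)}.
\]
Then $g$ is a measurable function on $\cN_f(S)$: $\mu\mapsto\mu(S)$ is measurable, and $\mu\mapsto\int m\,\dd\mu$ is measurable because $m$ is measurable and nonnegative, via simple-function approximation and monotone convergence. Only $m(x)<\infty$ pointwise is needed, so $g$ is finite. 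Also $g(\Lambda_t)$ equals the tree density.

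For any measurable $B\subset\cN_f(S)$, Proposition~\ref{prop:tree-RN} then gives
\[
 P_{\alpha,u}^{\Lambda}(B)=\bP_{\alpha,u}^{\mathrm{fin}}(\pi^{-1}B)
 =\int_{\pi^{-1}B}g(\Lambda_t)\,\bP_{\alpha,v}^{\mathrm{fin}}(\dd t)
 =\int_B g(\mu)\,P_{\alpha,v}^{\Lambda}(\dd\mu),
\]
where the last step is the change-of-variables formula for image measures. This is \eqref{eq:measure-RN}. Uniqueness of densities gives the a.e. statement.

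Mutual absolute continuity follows because $g$ is strictly positive and finite everywhere on $\cN_f(S)$. Exchanging the roles of $u$ and $v$ gives the reciprocal density $1/g$.

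The only point requiring care, and the one I expect to be the main (though mild) obstacle, is the measurability of $A(\mu)=\ip{\mu}{M\one}$ without the uniform bound \eqref{eq:bounded-kernel}, since $m$ may then be unbounded. Because $\mu$ is a finite point measure, $A(\mu)$ is a finite sum of finite values, so it is finite. The monotone approximation argument above establishes its measurability.
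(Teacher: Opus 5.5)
Your proposal is correct and follows the paper's proof: the paper likewise notes that the tree density in Proposition~\ref{prop:tree-RN} depends on $t$ only through $T(\Lambda_t)=|t|$ and $A(\Lambda_t)=\sum_{z\in t}m(x_z)$. It then pushes the identity forward under $t\mapsto\Lambda_t$ to obtain $P_{\alpha,u}^{\Lambda}(B)=\int_B R_{u,v}\,\dd P_{\alpha,v}^{\Lambda}$ for every measurable $B$. Your remarks on the measurability and finiteness of $A$ without the uniform bound, and on the strict positivity of the density giving mutual absolute continuity, only spell out details the paper leaves implicit.
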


\begin{proof}
The Radon--Nikodym derivative in Proposition~\ref{prop:tree-RN} is a measurable function
of the total progeny measure, because
\[
 |t|=\Lambda_t(S),\qquad
 \sum_{z\in t}m(x_z)=\ip{\Lambda_t}{M\one}.
\]
Write the right-hand side of \eqref{eq:measure-RN} as $R_{u,v}(\mu)$.  For every
measurable $B\subseteq\cN_f(S)$, Proposition~\ref{prop:tree-RN} gives directly
\begin{align*}
 \int_B R_{u,v}(\mu)P_{\alpha,v}^{\Lambda}(\dd\mu)
 &=\bE_{\alpha,v}[\one_{\{\Lambda_v\in B,\,T_v<\infty\}}
                    R_{u,v}(\Lambda_v)]\\
 &=\bP_{\alpha,u}(\Lambda_u\in B,\,T_u<\infty)
 =P_{\alpha,u}^{\Lambda}(B).
\end{align*}
This is precisely \eqref{eq:measure-RN} and also proves mutual absolute
continuity.
\end{proof}

The derivative of a measure-valued map must be interpreted with an integrability
condition.  For $N\geq1$, put
\[
 \cN_N(S):=\{\mu\in\cN_f(S):T(\mu)\leq N\}.
\]
On this set, both $T$ and $A\leq DT$ are bounded.

\begin{theorem}[Dynamical equations]\label{thm:dynamics}
Let $F:\cN_f(S)\to\mathbb R$ be bounded and supported on $\cN_N(S)$ for some
$N$.  Then $u\mapsto\int F\dd P_{\alpha,u}^{\Lambda}$ is continuously
differentiable on $(0,\infty)$, and
\begin{equation}\label{eq:weak-ODE}
 \frac{\dd}{\dd u}\int F(\mu)P_{\alpha,u}^{\Lambda}(\dd\mu)
 =\int F(\mu)\left\{\frac{T(\mu)-1}{u}-A(\mu)\right\}
     P_{\alpha,u}^{\Lambda}(\dd\mu).
\end{equation}
Moreover,
\begin{equation}\label{eq:weak-integral}
 \int F(\mu)P_{\alpha,u}^{\Lambda}(\dd\mu)
 =\frac1u\int_0^u\int F(\mu){T(\mu)-sA(\mu)}
       P_{\alpha,s}^{\Lambda}(\dd\mu)\dd s.
\end{equation}
Equivalently, on every $\cN_N(S)$, in the sense of finite signed measures,
\begin{align}
 \frac{\dd}{\dd u}P_{\alpha,u}^{\Lambda}(\dd\mu)
 &=\left\{\frac{T(\mu)-1}{u}-A(\mu)\right\}
    P_{\alpha,u}^{\Lambda}(\dd\mu),\label{eq:measure-ODE}\\
 P_{\alpha,u}^{\Lambda}(\dd\mu)
 &=\frac1u\int_0^u\{T(\mu)-sA(\mu)\}
    P_{\alpha,s}^{\Lambda}(\dd\mu)\dd s.\label{eq:measure-integral}
\end{align}
Let $I\subset(0,\infty)$ be a compact interval on which
$\sup_{s\in I}\bE_{\alpha,s}T_s<\infty$.  For every bounded $F$, the map
$u\mapsto u\int F\dd P_{\alpha,u}^{\Lambda}$ is absolutely continuous on $I$,
and \eqref{eq:weak-ODE} holds for Lebesgue-a.e. $u\in I$.  If the same moment
bound holds on $[0,u]$, then \eqref{eq:weak-integral} also holds for that $u$.
Here $T_s$ is an extended nonnegative random variable; in particular, the
moment assumptions in this paragraph imply $T_s<\infty$ almost surely at the
parameters under consideration.
\end{theorem}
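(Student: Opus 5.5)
Fix a reference parameter $v>0$ and use Theorem~\ref{thm:measure-RN} to write, for every bounded $F$ supported on $\cN_N(S)$,
\[
 J_F(u):=\int F\dd P_{\alpha,u}^{\Lambda}
 =\int F(\mu)\Bigl(\frac uv\Bigr)^{T(\mu)-1}e^{-(u-v)A(\mu)}\,P_{\alpha,v}^{\Lambda}(\dd\mu).
\]
On $\cN_N(S)$ we have $1\le T\le N$ and $0\le A\le DN$. Hence the integrand is $C^\infty$ in $u$, and its $u$-derivative is
\[
 F(\mu)\Bigl\{\frac{T(\mu)-1}{u}-A(\mu)\Bigr\}\Bigl(\frac uv\Bigr)^{T(\mu)-1}e^{-(u-v)A(\mu)}.
\]
This derivative is bounded uniformly for $u$ in compact subsets of $(0,\infty)$. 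Dominated differentiation under the integral sign is therefore legitimate. Converting back via Theorem~\ref{thm:measure-RN} gives \eqref{eq:weak-ODE}, and the same bound shows continuity of the derivative. The signed-measure forms \eqref{eq:measure-ODE}--\eqref{eq:measure-integral} are restatements once the weak forms hold for all bounded $F$ supported in $\cN_N(S)$.

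For \eqref{eq:weak-integral}, I rewrite \eqref{eq:weak-ODE} as $\frac{\dd}{\dd u}\{uJ_F(u)\}=\int F\{T-uA\}\dd P_{\alpha,u}^{\Lambda}$. I integrate from $\varepsilon$ to $u$ and let $\varepsilon\downarrow0$, using $|\varepsilon J_F(\varepsilon)|\le\varepsilon\|F\|_\infty$. The right side is integrable near $0$ because $|F(T-sA)|\le\|F\|_\infty(N+sDN)$ and $P_{\alpha,s}^{\Lambda}$ has mass at most one.

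For general bounded $F$ on a compact interval $I$ with $\sup_{s\in I}\bE_{\alpha,s}T_s<\infty$, I truncate with $F_N:=F\one_{\cN_N(S)}$. On $I$, the identity
\[
 uJ_{F_N}(u)-u_0J_{F_N}(u_0)=\int_{u_0}^u\int F_N\{T-sA\}\dd P_{\alpha,s}^{\Lambda}\dd s
\]
holds exactly. Let $N\to\infty$. The left side converges by bounded convergence. On the right, $|F_N(T-sA)|\le\|F\|_\infty(1+sD)T$, and $\int T\dd P_{\alpha,s}^{\Lambda}\le\bE_{\alpha,s}T_s$ is bounded uniformly on $I$. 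Dominated convergence in $\mu$ and then in $s$ therefore gives the limit identity with $F$ in place of $F_N$. The integrand in $s$ lies in $L^1(I)$, so $u\mapsto uJ_F(u)$ is absolutely continuous. Lebesgue differentiation yields $\frac{\dd}{\dd u}(uJ_F)=\int F\{T-uA\}\dd P^\Lambda_{\alpha,u}$ a.e., which is equivalent to \eqref{eq:weak-ODE}. If the moment bound holds on $[0,u]$, I take $u_0\downarrow0$, again using $u_0J_F(u_0)\to0$, to get \eqref{eq:weak-integral}.

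Finiteness $T_s<\infty$ a.s. follows trivially from a finite mean. I expect the main obstacle to be the passage from truncated to general $F$: the domination needs $A\le DT$ (this is where \eqref{eq:bounded-kernel} enters) together with the uniform first-moment bound. A secondary point is measurability of $s\mapsto\int G\dd P_{\alpha,s}^{\Lambda}$, which holds because the Radon--Nikodym representation is jointly measurable in $(s,\mu)$.
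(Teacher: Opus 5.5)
Your proposal is correct and follows the paper's proof essentially step for step. The paper also (i) differentiates under the integral using the Radon--Nikodym density of Theorem~\ref{thm:measure-RN}, with uniform bounds on $\cN_N(S)$; (ii) integrates $\frac{\dd}{\dd u}(uJ_F)$ using $|uJ_F(u)|\le u\|F\|_\infty$; and (iii) treats general bounded $F$ by the truncation $F\one_{\{T\le N\}}$, with the domination $\|F\|_\infty(1+sD)T$ and the uniform first-moment bound. Your remark on joint measurability in $(s,\mu)$ supplies a detail the paper leaves implicit, and your bound $T\geq1$ should be read $P^{\Lambda}_{\alpha,v}$-a.e., since $\cN_N(S)$ contains the zero measure; this is harmless.
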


\begin{proof}
Fix $v>0$ and insert \eqref{eq:measure-RN} into the left-hand side of
\eqref{eq:weak-ODE}.  On $\cN_N(S)$ the density and its derivative are bounded
uniformly when $u$ ranges over a compact subset of $(0,\infty)$.  Differentiation
under the integral is therefore justified and gives \eqref{eq:weak-ODE}.

Multiplying \eqref{eq:weak-ODE} by $u$ gives
\begin{equation}\label{eq:derivative-uP}
 \frac{\dd}{\dd u}\left(u\int F\dd P_{\alpha,u}^{\Lambda}\right)
 =\int F(\mu)\{T(\mu)-uA(\mu)\}
        P_{\alpha,u}^{\Lambda}(\dd\mu).
\end{equation}
The term on the left converges to zero as $u\downarrow0$, since
\[
 \left|u\int F\dd P_{\alpha,u}^{\Lambda}\right|
 \leq u\|F\|_\infty\longrightarrow0
\]
and $P_{\alpha,u}^{\Lambda}$ is a subprobability measure.  Integrating
\eqref{eq:derivative-uP} from $0$ to $u$ proves
\eqref{eq:weak-integral}.  The measure formulations follow by testing against
bounded measurable functions supported on $\cN_N(S)$.

For the final assertion, apply \eqref{eq:derivative-uP} to
$F_N=F\one_{\{T\leq N\}}$ and integrate between two endpoints of $I$.  Since
\[
 |F_N(\Lambda_s)(T_s-sA(\Lambda_s))|
 \leq \|F\|_\infty(1+sD)T_s,
\]
the assumed uniform first-moment bound and dominated convergence in
$\dd s\,\dd\bP$ permit $N\to\infty$.  The resulting integral representation
shows that $u\mapsto u\int F\dd P_{\alpha,u}^{\Lambda}$ is absolutely
continuous.  Since $u$ is bounded away from zero on $I$, division by $u$ and
the product rule recover \eqref{eq:weak-ODE} for Lebesgue-a.e. $u\in I$.
If the moment bound holds down to zero, the same argument on
$[\varepsilon,u]$, followed by $\varepsilon\downarrow0$, gives
\eqref{eq:weak-integral}.
\end{proof}

\begin{remark}
The local formulation in Theorem~\ref{thm:dynamics} remains valid at critical
parameters, where $\bE T_u$ may be infinite.  Writing the equation globally
without a test-function or integrability qualification can otherwise lead to an
undefined difference of two infinite quantities.
\end{remark}

\section{Exponential moments in the spectrally subcritical regime}\label{sec:exp}

Let $\rho(M)$ denote the spectral radius of $M$ as a bounded operator on
$B_b(S)$.  The elementary domination $m(x)\leq D$ gives exponential moments
when $uD<1$.  The next theorem replaces this sufficient condition by the weaker
spectral condition $u\rho(M)<1$ on $B_b(S)$.

\begin{theorem}[Subcritical exponential integrability]\label{thm:exp-full}
Assume \eqref{eq:bounded-kernel} and let $u\geq0$ satisfy
\begin{equation}\label{eq:strict-subcritical}
 u\rho(M)<1.
\end{equation}
Then there exists $\theta>0$ such that
\begin{equation}\label{eq:uniform-exp}
 \sup_{x\in S}\bE_{x,u}e^{\theta T_u}<\infty.
\end{equation}
More precisely, for every compact set
$J\subset\{s\geq0:s\rho(M)<1\}$ there are $\theta_J>0$ and $C_J<\infty$
such that
\[
 \sup_{s\in J}\sup_{x\in S}\bE_{x,s}e^{\theta_JT_s}\leq C_J.
\]
\end{theorem}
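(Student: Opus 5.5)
The plan is to build a bounded Lyapunov-type function from the resolvent of $uM$ on $B_b(S)$. Then I run an induction over generations on the exponential moment of the truncated progeny, and pass to the limit by monotone convergence.

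\emph{Step 1 (resolvent function).} Fix $r$ with $u\rho(M)<r<1$. By Gelfand's formula, $\|(uM)^k\|^{1/k}\to u\rho(M)<r$, so the series
\[
 g:=\sum_{k\geq0}r^{-k}(uM)^k\one
\]
converges in $B_b(S)$. Positivity of $M$ gives $1\leq g\leq G:=\|g\|_\infty<\infty$. Shifting the index shows
\[
 uMg=r(g-\one)\leq rg .
\]
This is the only place where the spectral hypothesis \eqref{eq:strict-subcritical} enters. I expect it to be the main conceptual step: it replaces the crude bound $uD<1$ by a weighted supersolution. No eigenfunction is needed, so Assumption~\ref{ass:eigenfunction} is not used. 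The case $u=0$ is trivial, since then $T_0=1$.

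\emph{Step 2 (generation induction).} Let $T^{(h)}_u=\sum_{k\leq h}Z_{u,k}(S)$ and $w_h(x):=\bE_{x,u}e^{\theta T^{(h)}_u}$. Each $w_h$ is finite, because the truncated progeny has all exponential moments when $m\leq D$. The branching property and the Poisson Laplace functional give the recursion
\[
 w_{h+1}(x)=e^{\theta}\exp\{uM(w_h-\one)(x)\},\qquad w_0=e^\theta .
\]
Choose $\varepsilon>0$ with $re^{\varepsilon G}<1$, and set $\theta:=\varepsilon(1-re^{\varepsilon G})>0$.

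The inductive claim is $w_h\leq e^{\varepsilon g}$. Given it for $h$, the inequality $e^{\varepsilon g}-1\leq\varepsilon g e^{\varepsilon G}$ and Step 1 yield
\[
 uM(w_h-\one)\leq \varepsilon e^{\varepsilon G}\,uMg\leq \varepsilon re^{\varepsilon G}g .
\]
Hence
\[
 \log w_{h+1}\leq\theta+\varepsilon re^{\varepsilon G}g\leq\varepsilon g,
\]
where the last inequality uses $g\geq1$. The base case holds because $\theta\leq\varepsilon\leq\varepsilon g$. Monotone convergence in $h$ then gives $\sup_x\bE_{x,u}e^{\theta T_u}\leq e^{\varepsilon G}$, which is \eqref{eq:uniform-exp}. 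In particular $T_u<\infty$ almost surely.

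\emph{Step 3 (uniformity over compact $J$).} Let $s^*:=\max J$, so that $s^*\rho(M)<1$. For $s\leq s^*$, the thinning coupling of Section~\ref{sec:model} realizes $G_s$ inside $G_{s^*}$ by retaining edges with probability $s/s^*$. This coupling rests on Corollary~5.9 of \cite{LastPenrose2018} and is valid for any $s^*>0$ by rescaling the kernel. It gives $T_s\leq T_{s^*}$ pathwise. Therefore the constants $\theta_J:=\theta$ and $C_J:=e^{\varepsilon G}$ obtained at $s^*$ in Steps 1--2 work for all $s\in J$.
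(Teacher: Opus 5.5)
Your proof is correct and follows the paper's argument: a bounded resolvent function built from $uM$ on $B_b(S)$, an exponential supersolution $e^{\varepsilon g}$ for the Poisson generating-functional recursion on the truncated progeny, and monotone convergence. The differences are minor. You obtain slack from the weighted resolvent with $r\in(u\rho(M),1)$ and the first-order bound $e^z-1\le ze^z$, whereas the paper uses $uMh=h-1$ with a second-order Taylor bound. For compact $J$ you use the thinning coupling $T_s\le T_{s^*}$, whereas the paper uses the operator inequality $uMh_0\le u_0Mh_0$.
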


\begin{proof}
The case $u=0$ is immediate, so assume $u>0$.  Since the spectral radius of
$uM$ is smaller than one, the positive resolvent
\[
 h:=(I-uM)^{-1}\one=\sum_{k\geq0}(uM)^k\one
\]
converges in $B_b(S)$.  Indeed, the spectral-radius formula gives an integer
$n_0$ such that $\|(uM)^{n_0}\|<1$.  Grouping the Neumann series by residues
modulo $n_0$ reduces it to a finite sum of operator-norm convergent geometric
series; positivity of $M$ makes the resulting resolvent positive.  Put
$H=\|h\|_\infty$.  Then
\begin{equation}\label{eq:resolvent-h}
 1\leq h\leq H,\qquad uMh=h-1.
\end{equation}

For a nonnegative measurable $f$, the Poisson generating functional gives the
map
\[
 \Psi_\theta(f)(x)
 :=\exp\{\theta+uM(f-1)(x)\}.
\]
Starting from $f_0=1$ and setting $f_{n+1}=\Psi_\theta(f_n)$, the function $f_n$
has an exact truncated-tree interpretation.  Namely, let
\[
 T_u^{[n]}:=\sum_{k=0}^{n}Z_{u,k}(S).
\]
Then $f_1(x)=e^\theta=\bE_{x,u}e^{\theta T_u^{[0]}}$.  If $\eta$ is the
offspring point process of the root, conditioning on $\eta$ gives inductively
\begin{align*}
 \bE_{x,u}e^{\theta T_u^{[n+1]}}
 &=e^\theta\bE\left[\prod_{y\in\eta}
      \bE_{y,u}e^{\theta T_u^{[n]}}\right]\\
 &=\exp\{\theta+uM(f_{n+1}-1)(x)\}
 =f_{n+2}(x).
\end{align*}
Thus $f_{n+1}(x)=\bE_{x,u}e^{\theta T_u^{[n]}}$ exactly.  Since
$T_u^{[n]}\uparrow T_u$, monotone convergence yields
$f_n(x)\uparrow\bE_{x,u}e^{\theta T_u}$, even before finiteness of the limit is
known.

Choose $a>0$ so small that
\begin{equation}\label{eq:choose-a}
 aH\leq1,\qquad \frac e2aH^2\leq\frac12,
\end{equation}
and put $\theta=a/2$.  We claim that $g=e^{ah}$ is a supersolution of
$\Psi_\theta$.  For $0\leq z\leq1$,
$e^z-1\leq z+(e/2)z^2$; indeed,
$e^z-1-z=\int_0^z(z-t)e^t\dd t\leq(e/2)z^2$ on this interval.
Hence, using $h^2\leq Hh$ and
\eqref{eq:resolvent-h},
\begin{align*}
 \theta+uM(g-1)
 &\leq \theta+a uMh+\frac e2a^2uM(h^2)\\
 &\leq \theta+a(h-1)+\frac e2a^2H(h-1)\\
 &\leq ah+\theta-a+\frac e2a^2H^2
 \leq ah.
\end{align*}
Thus $\Psi_\theta(g)\leq g$.  Monotonicity of $\Psi_\theta$ gives
$f_n\leq g\leq e^{aH}$ for all $n$.  Monotone convergence now proves
\eqref{eq:uniform-exp}.

For the uniform assertion, let $u_0=\max J$.  Then
$u_0\rho(M)<1$.  Use
$h_0=(I-u_0M)^{-1}\one$.  Since $uMh_0\leq u_0Mh_0=h_0-1$, the same
supersolution and the same bound work for every $u\in J$.
\end{proof}

An explicit, although sometimes more restrictive, bound follows from scalar
domination.

\begin{proposition}[Explicit Poisson domination]\label{prop:scalar-domination}
If $\lambda=uD<1$, then $T_u$ is stochastically dominated, uniformly in the root
type, by the total progeny $T^{(\lambda)}$ of a one-type Galton--Watson process
with $\operatorname{Poisson}(\lambda)$ offspring.  Consequently,
\begin{equation}\label{eq:explicit-theta}
 \sup_x\bE_{x,u}e^{\theta T_u}<\infty
 \quad\text{for every}\quad
 0\leq\theta<\lambda-1-\log\lambda.
\end{equation}
For $\lambda=0$ all exponential moments are finite.
\end{proposition}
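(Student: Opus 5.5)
We prove Proposition~\ref{prop:scalar-domination} with a coupling that dominates the typed tree by an untyped Poisson tree, followed by the classical computation of exponential moments of the Poisson total progeny.

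\emph{Step 1 (coupling).} Since $m(x)\le D$, a Poisson point process $\eta$ with intensity $uM(x,\dd y)$ can be realized from a Poisson process with intensity $uD$. Concretely, let $\eta'$ be a Poisson process on $S\times[0,1]$ with intensity $u\,D^{-1}\!\cdot\!\bigl(D\,\widetilde M(x,\dd y)\bigr)\otimes\dd r$, where $\widetilde M$ is $M(x,\cdot)$ augmented by a cemetery mass $D-m(x)$ on an extra point. Its total count is $\operatorname{Poisson}(uD)$, and restricting to $S$ gives exactly $\eta$ by the thinning/restriction theorem (Corollary~5.9 of \cite{LastPenrose2018}). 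Equivalently, give each vertex $\operatorname{Poisson}(\lambda)$ potential children, and independently keep each one with probability $m(x)/D$, typing it by $M(x,\cdot)/m(x)$. Doing this generation by generation, with ghost children spawning ghost Poisson$(\lambda)$ subtrees, embeds $G_u$ rooted at any $x$ inside a one-type $\operatorname{PGW}(\lambda)$ tree. Hence $T_u\le T^{(\lambda)}$ almost surely under the coupling, for every root type. The only care needed is measurability of the joint construction, which again follows from Ulam--Harris labelling.

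\emph{Step 2 (Borel law).} For $\lambda<1$, $T^{(\lambda)}$ is Borel distributed: $\bP(T^{(\lambda)}=n)=e^{-\lambda n}(\lambda n)^{n-1}/n!$. One can cite the hitting-time or Lagrange inversion formula, or derive it from the generating function $G(s)=s\,e^{\lambda(G(s)-1)}$.

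\emph{Step 3 (exponential moment threshold).} By Stirling, $\bP(T^{(\lambda)}=n)\sim (2\pi)^{-1/2}\lambda^{-1}n^{-3/2}\exp\{-n(\lambda-1-\log\lambda)\}$. Therefore $\bE e^{\theta T^{(\lambda)}}<\infty$ exactly when $\theta\le \lambda-1-\log\lambda$, and in particular for all $\theta$ strictly below this value. Here $\lambda-1-\log\lambda>0$ for $\lambda\in(0,1)$. Together with the domination, this gives \eqref{eq:explicit-theta} uniformly in $x$.

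\emph{Step 4 (the case $\lambda=0$).} If $\lambda=0$, then either $u=0$ or $D=0$, so the tree is the root alone and $T_u=1$.

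The only step requiring real care is the coupling in Step~1, specifically making the typed thinning consistent across generations. The stochastic domination could alternatively be proved without a coupling: by induction on $n$, show $\bE_{x,u}f(T_u^{[n]})\le \bE f(T^{(\lambda),[n]})$ for increasing $f$ of exponential form, using the generating-functional recursion from the proof of Theorem~\ref{thm:exp-full}. This is enough for the exponential-moment conclusion.
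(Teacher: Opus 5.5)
Your proof is correct. Step~1 is the paper's coupling: the paper also places $\operatorname{Poisson}(um(x))$ real children inside a $\operatorname{Poisson}(uD)$ family by superposing an independent $\operatorname{Poisson}(u(D-m(x)))$ batch of ghost children, and iterates this down the tree.

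The two routes differ only in how the exponential moment of $T^{(\lambda)}$ is bounded.

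\textbf{The paper's route.} It never uses the explicit law of $T^{(\lambda)}$. It iterates the truncated-progeny generating function, $F_0=e^\theta$ and $F_{n+1}=e^\theta e^{\lambda(F_n-1)}$. A finite fixed point satisfies $\theta=\phi(F):=\log F-\lambda(F-1)$. For $\theta$ below $\max\phi=\phi(1/\lambda)=\lambda-1-\log\lambda$, the smaller root bounds every iterate, and monotone convergence gives finiteness.

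\textbf{Your route.} Using the Borel law and Stirling's formula imports an external classical input (Lagrange inversion or the hitting-time theorem). In return it gives exact tail asymptotics of order $n^{-3/2}e^{-n(\lambda-1-\log\lambda)}$. This shows the threshold is sharp for the dominating variable and that the endpoint $\theta=\lambda-1-\log\lambda$ is also admissible.

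\textbf{What the paper's route buys.} It is self-contained and explains the threshold as the fold point where the two fixed points of the scalar recursion merge. It parallels the supersolution argument of Theorem~\ref{thm:exp-full}, and it would apply to offspring laws with no closed-form progeny distribution.

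Your closing remark is essentially the paper's moment argument. Comparing the typed iterates with the scalar ones via $uM(f_n-1)\leq uD(F_n-1)$, valid since $F_n\geq1$, and then using the scalar recursion, makes the pathwise coupling unnecessary for the exponential bound.

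Two minor points. The $[0,1]$ coordinate in your augmented process is superfluous. Passing from that process to $\eta$ uses the restriction/marking theorem, not the thinning corollary you cite.
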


\begin{proof}
Given a particle of type $x$, its total number of children is
$\operatorname{Poisson}(um(x))$.  Couple it below an independent
$\operatorname{Poisson}(uD)$ variable by adding a Poisson variable of mean
$u(D-m(x))$.  Iterating this coupling over the family tree proves the stochastic
 domination.  If $T_n^{(\lambda)}$ is the progeny through generation $n$, its
 moment generating function is obtained by iterating
$F_0=e^\theta$ and $F_{n+1}=e^\theta e^{\lambda(F_n-1)}$.  Thus the moment
generating function of $T^{(\lambda)}$ is the monotone limit of this iteration
and is the minimal solution in $[1,\infty]$ of
\[
 F=e^\theta e^{\lambda(F-1)}.
\]
Equivalently, a finite solution satisfies
\[
 \theta=\phi(F):=\log F-\lambda(F-1).
\]
For $0<\lambda<1$, the concave function $\phi$ has its maximum at
$F=1/\lambda$, and
$\phi(1/\lambda)=\lambda-1-\log\lambda$.  If $\theta$ is below this maximum,
the smaller solution bounds every iterate $F_n$, so monotone convergence gives a
finite moment.  This proves \eqref{eq:explicit-theta}; the case $\lambda=0$ is
immediate.
\end{proof}

\section{Detailed balance and re-rooting}\label{sec:rerooting}

Let $K$ be another finite kernel on $S$ and let $\pi$ be a probability measure.
We say that $(\pi,K)$ satisfies detailed balance if the measure
\begin{equation}\label{eq:detailed-balance}
 \pi(\dd x)K(x,\dd y)
\end{equation}
on $S^2$ is symmetric.  Let $\cT$ be a Poisson Galton--Watson tree with root law
$\pi$ and offspring kernel $K$.  This re-rooting symmetry is closely related to
the mass-transport principle in Definition~2.1 of \cite{AldousLyons2007}.

\begin{lemma}[Re-rooting invariance]\label{lem:rerooting}
Suppose \eqref{eq:detailed-balance} holds and $\cT$ is finite almost surely.
Conditional on the unrooted typed tree underlying $\cT$, the root is uniformly
distributed over its vertices.
\end{lemma}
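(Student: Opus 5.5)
We compute the law of a finite typed tree together with a marked vertex and show that it is invariant under moving the mark.

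\textbf{Step 1: the density of a finite tree.} Work with the \emph{unordered} version of the tree, i.e.\ the forgetful image in which sibling order is discarded. This is legitimate because the enumeration chosen in Section~\ref{sec:model} does not affect any law. Fix a finite tree shape with $n$ vertices and edge set $E$. Iterating the Poisson Janossy formula, as in the proof of Proposition~\ref{prop:tree-RN}, shows that the law of $\cT$ restricted to this shape has density
\[
 \pi(\dd x_\varnothing)\prod_{(z,w)\in E}K(x_z,\dd x_w)\prod_{z}e^{-K(x_z,S)}
\]
with respect to the labelled configuration. The symmetry factors coming from the $1/k!$ in the Janossy measures are handled by working with trees on vertex labels $\{1,\dots,n\}$: the Janossy $1/N_z!$ compensates the ordering of children. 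The vertex factor $\prod_z e^{-K(x_z,S)}$ does not depend on which vertex is the root.

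\textbf{Step 2: detailed balance removes the root.} Detailed balance says that $\pi(\dd x)K(x,\dd y)$ is symmetric. Applying this along the path from the root $r$ to any other vertex $r'$, one edge at a time, gives
\[
 \pi(\dd x_r)\prod_{E\text{ directed from }r}K
 =\pi(\dd x_{r'})\prod_{E\text{ directed from }r'}K
\]
as measures on $S^n$. Each step swaps $\pi(\dd x)K(x,\dd y)$ for $\pi(\dd y)K(y,\dd x)$ on the current edge, and the other factors are unchanged. The measure on labelled trees therefore has the form $\Phi(\text{unrooted labelled typed tree})$ times a factor that does not depend on the root.

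\textbf{Step 3: counting.} The number of labelled rooted versions of a given unrooted labelled tree is the same for every choice of root, namely $(n-1)!$ relabellings per root choice once the root is fixed. Hence the joint law of the pair (unrooted typed tree, root) is a product of a measure on unrooted trees and counting measure on the vertices. Disintegrating with respect to the unrooted tree gives the uniform conditional law of the root. Because $\cT$ is finite almost surely, this accounts for the whole law of $\cT$.

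\textbf{Main obstacle.} The difficulty is making Step~1 rigorous on a continuous type space, where combinatorial symmetry factors and the lack of point masses interact. My first attempt would be to avoid explicit factors entirely. For bounded measurable $F$ of a tree with a marked vertex, I would prove the mass-transport identity
\[
 \bE\sum_{v\in\cT}F(\cT,\varnothing,v)=\bE\sum_{v\in\cT}F(\cT,v,\varnothing)
\]
by induction on the height, using the Mecke equation (Theorem~4.1 of \cite{LastPenrose2018}) to swap the root with a neighbouring child. Near the root, that single swap is exactly detailed balance, and the rest of the tree is unchanged in law. The uniform-root statement then follows by taking $F=f(\text{unrooted tree})g(\text{root})$.
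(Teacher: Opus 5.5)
Your Steps 1--2 are essentially the paper's argument. The paper also labels the vertices by $[n]$ and writes the law of each labelled rooted configuration as the kernel product $\pi(\dd x_r)\prod_z e^{-K(x_z,S)}\prod_{(z,w)}K(x_z,\dd x_w)$, with the Janossy $1/k!$ cancelling the orderings so that there is no degree factorial and only an $r$-independent constant. It then moves the root across one edge at a time by detailed balance, and disintegrates over the unrooted tree.

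The one point to tighten is how the labels arise. The paper ranks i.i.d.\ atomless marks, which is a uniformly random labelling. With that choice, equality of the root-$r$ measures for all $r$ already says the root label is uniform, and your Step~3 count of ``$(n-1)!$ relabellings per root'' is superfluous. Read as a count of distinct labelled typed trees, it would also be off by automorphism factors when $K$ has atoms.
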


\begin{proof}
We give the measure argument in detail because pointwise ``densities of types''
need not exist on a general standard Borel space.  Conditional on the finite
tree, attach to every vertex an independent auxiliary mark with the atomless
uniform law on $[0,1]$.  Almost surely the marks are distinct, so their ranks
label the vertices of every finite realization by $[n]$.  These marks are
independent of the types and will be forgotten at the end.

Fix an unrooted labelled abstract tree $\tau$ on $[n]$ and a proposed root
$r\in[n]$, and orient every edge away from $r$.  Let $Q_{\tau,r}$ be the finite
measure on $S^n$ obtained by restricting the marked branching-tree law to the
event that the ranked unrooted tree is $\tau$ and its root has label $r$, and
then retaining only the type coordinates.  Successive use of the Poisson
Janossy formula shows that, up to a positive combinatorial factor depending on
$n$ and $\tau$ but not on $r$, this measure has the kernel-product
representation
\begin{equation}\label{eq:rooted-tree-weight}
 \pi(\dd x_r)
 \prod_{z\in[n]}e^{-K(x_z,S)}
 \prod_{(z,w)\in\vec E_r(\tau)}K(x_z,\dd x_w).
\end{equation}
Here the product means iterated integration from the root towards the leaves.
There is no degree factorial in \eqref{eq:rooted-tree-weight}: in the Janossy
measure the factor $1/k!$ accounts for the $k!$ permutations of a $k$-tuple,
while the distinct auxiliary marks give exactly one labelled assignment for
each permutation.  Equivalently, order each offspring configuration by its
auxiliary marks; on this ordered coordinate chart the two factors cancel.  This
derives the stated kernel-product representation as an equality of measures,
rather than as a formal density.  The common product of the auxiliary-mark Lebesgue measures
has been suppressed.

If $r$ and $r'$ are adjacent, the two directed edge sets differ only by reversing
the edge between them.  The corresponding factors in
\eqref{eq:rooted-tree-weight} are the measures
\(\pi(\dd x_r)K(x_r,\dd x_{r'})\) and
\(\pi(\dd x_{r'})K(x_{r'},\dd x_r)\), which are equal by detailed balance.
All remaining edge orientations and all vertex exponential factors are
unchanged.  Iterated integration in the kernel-product representation therefore
shows that $Q_{\tau,r}$ and $Q_{\tau,r'}$ agree on measurable rectangles in
$S^n$.  Uniqueness of finite measures on the product sigma-field then gives
$Q_{\tau,r}=Q_{\tau,r'}$.  Connectivity of $\tau$ gives
\begin{equation}\label{eq:all-root-measures-equal}
 Q_{\tau,1}=\cdots=Q_{\tau,n}.
\end{equation}

Sum \eqref{eq:all-root-measures-equal} over labelled tree shapes $\tau$ and over
$n$.  Since the marked-tree space is standard Borel, these equalities may be
disintegrated with respect to the unrooted typed and auxiliary-marked tree.
They say that, conditional on a realization with $n$ vertices, each of its
$n$ labels has the same conditional probability of being the root; the
probability is therefore $1/n$.  Forgetting the independent auxiliary marks
preserves uniformity over the actual vertices.  The assumption of almost-sure
finiteness ensures that the finite disintegration covers the whole law.
\end{proof}

\begin{corollary}[Negative first moment]\label{cor:negative-moment}
Under the assumptions of Lemma~\ref{lem:rerooting}, if
\[
 \bar m:=\int_SK(x,S)\pi(\dd x)<\infty,
\]
then
\begin{equation}\label{eq:negative-moment}
 \bE_\pi\left[\frac1T\right]=1-\frac{\bar m}{2}.
\end{equation}
\end{corollary}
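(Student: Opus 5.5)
The plan is to apply Lemma~\ref{lem:rerooting} to a function of the rooted tree that equals the root's offspring count, and compare its mean under the rooted law with its mean after averaging over vertices. Since the root is uniform given the unrooted tree, for any nonnegative measurable functional $g(\text{tree},v)$ we have
\[
 \bE_\pi[g(\cT,\varnothing)]=\bE_\pi\Big[\frac1T\sum_{v\in\cT}g(\cT,v)\Big].
\]
Take $g(\cT,v)=\deg(v)$, the graph degree of $v$ in the unrooted tree. At the root, $\deg(\varnothing)=N_\varnothing$ is Poisson with mean $K(x_\varnothing,S)$, so
\[
 \bE_\pi[\deg(\varnothing)]=\int_SK(x,S)\pi(\dd x)=\bar m<\infty .
\]
On the other side, a finite tree with $T$ vertices has $\sum_v\deg(v)=2(T-1)$. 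This gives
\[
 \bar m=\bE_\pi\Big[\frac{2(T-1)}{T}\Big]=2-2\,\bE_\pi[1/T],
\]
which rearranges to \eqref{eq:negative-moment}. The right-hand side is finite because $T\geq1$, so every term is integrable and no subtraction of infinite quantities occurs.

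The main obstacle is justifying the displayed averaging identity rigorously from the lemma's conditional statement. The lemma says that, conditional on the unrooted typed tree (together with the auxiliary marks), the root label is uniform among the $n$ vertices. The degree function depends only on the unrooted labelled tree and the chosen vertex, so it is measurable with respect to the marked-tree disintegration used in the proof of Lemma~\ref{lem:rerooting}. To use it, I would disintegrate over the unrooted marked tree and compute the conditional expectation of $\deg(\text{root})$ as $\frac1n\sum_v\deg(v)$ on each stratum $\{T=n\}$. Summing over $n$ with monotone convergence then gives the identity, using the almost-sure finiteness of $\cT$ so that these strata exhaust the law.

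Equivalently, I could phrase the argument through the measures $Q_{\tau,r}$ of \eqref{eq:all-root-measures-equal}. The equalities $Q_{\tau,r}=Q_{\tau,r'}$ let me replace $\deg_\tau(r)$ by the average $\frac1n\sum_{r'}\deg_\tau(r')=2(n-1)/n$ when integrating against $Q_{\tau,r}$. Summing over $\tau$, $r$ and $n$ then yields the result directly, without having to discuss conditional laws.
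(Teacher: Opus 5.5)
Your proposal is correct and follows the paper's argument. The root's degree is its offspring count, with mean $\bar m$; Lemma~\ref{lem:rerooting} and the handshaking lemma give conditional mean $2(T-1)/T$ for that degree; taking expectations and rearranging yields \eqref{eq:negative-moment}. Your $Q_{\tau,r}$ variant is a valid way to make the conditioning explicit, and since degrees and $T$ depend only on the tree shape, it only needs equality of the total masses $Q_{\tau,r}(S^n)$.
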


\begin{proof}
The degree of the root is its number of offspring, so its expected degree is
$\bar m$.  Let $\mathcal U$ be the underlying unrooted typed tree.  Conditional
on $\mathcal U$, if it has $T$ vertices, Lemma~\ref{lem:rerooting} and the
handshaking lemma give
\[
 \bE[\deg(o)\mid\mathcal U]=\frac{2(T-1)}{T}
 =2\left(1-\frac1T\right).
\]
Taking expectations and rearranging proves \eqref{eq:negative-moment}.
\end{proof}

\begin{remark}
In the one-type case, \eqref{eq:negative-moment} also follows by summing the
Borel total-progeny law in equation~(10) of \cite{AldousPitman1998}; the
uniform-root symmetry conditional on total size is reflected in their
Proposition~1, equation~(15).
\end{remark}

\begin{remark}
The Poisson assumption is important in Lemma~\ref{lem:rerooting}.  For a general
offspring distribution, re-rooting changes the product of offspring
probabilities, and detailed balance of the mean kernel alone need not imply a
uniform root.
\end{remark}

\section{Component densities of sparse random graphs}\label{sec:graphs}

We first define the inhomogeneous random graph and its associated branching
process under the graphical-kernel hypotheses of Bollob\'as, Janson and Riordan
\cite{BJR2007}.  Their fixed-component asymptotics give the reciprocal-progeny
representation of the limiting component density.  We then combine the
re-rooting identity of Section~\ref{sec:rerooting} with branching-process
duality to evaluate this expectation.

\subsection{The BJR model and its branching process}

Let $(S,d)$ be a separable metric space, let $\pi$ be a Borel
probability measure, and, for each $n$, let
$\boldsymbol x_n=(x_1^{(n)},\ldots,x_n^{(n)})$ be a possibly random sequence of
types.  Write $\mathcal V=(S,\pi,(\boldsymbol x_n)_{n\geq1})$.  We call
$\mathcal V$ a \emph{vertex space} if its empirical type measure
\begin{equation}\label{eq:vertex-space}
 \nu_n:=\frac1n\sum_{i=1}^n\delta_{x_i^{(n)}}
 \xrightarrow{\bP}\pi
\end{equation}
weakly; equivalently, $\nu_n(A)\to\pi(A)$ in probability for every
$\pi$-continuity set $A$.  Let $\kappa:S^2\to[0,\infty)$ be symmetric and
measurable, and fix $u\geq0$.

For symmetric measurable kernels $\kappa_n:S^2\to[0,\infty)$, define
$G_n=G^{\mathcal V}(n,\kappa_n)$ conditionally on the types by taking distinct
edges independently with
\begin{equation}\label{eq:BJR-edge-prob}
 \bP(ij\in E(G_n)\mid\boldsymbol x_n)
 =p_{ij}^{(n)}
 :=\min\left\{\frac{\kappa_n(x_i^{(n)},x_j^{(n)})}{n},1\right\}.
\end{equation}
We say that $(\kappa_n)$ is a \emph{graphical sequence with limit $u\kappa$}
when the following three conditions hold:
\begin{enumerate}[label=\textup{(G\arabic*)},leftmargin=2.8em]
 \item $u\kappa$ is continuous $\pi\otimes\pi$-almost everywhere and belongs
       to $L^1(\pi\otimes\pi)$;
 \item for $\pi\otimes\pi$-almost every $(x,y)$, whenever $x_n\to x$ and
       $y_n\to y$, one has $\kappa_n(x_n,y_n)\to u\kappa(x,y)$;
 \item the expected edge density has the correct limit,
 \begin{equation}\label{eq:graphical-edge-assumption}
  \frac1n\bE|E(G_n)|\longrightarrow
  \frac12\iint_{S^2}u\kappa(x,y)\pi(\dd x)\pi(\dd y).
 \end{equation}
\end{enumerate}
These are precisely the vertex-space and graph construction on pp.~7--8 and
the graphical-sequence conditions in Definitions~2.7 and~2.9 of
\cite{BJR2007}, specialized to a probability ground space and limit
$u\kappa$.  For the constant sequence $\kappa_n=u\kappa$, condition
\eqref{eq:graphical-edge-assumption} says that $u\kappa$ is graphical.  In
particular, it is automatic for bounded, almost-everywhere continuous kernels
by Lemma~8.1 of \cite{BJR2007}.

The branching process associated with $u\kappa$ is the following typed Poisson
Galton--Watson tree $\cT_u$.  The root type has law $\pi$.  Independently for
different particles, a particle of type $x$ has a Poisson point process of child
types with intensity
\begin{equation}\label{eq:IRG-branching-kernel}
 K_u(x,\dd y):=u\kappa(x,y)\pi(\dd y).
\end{equation}
Since $u\kappa\in L^1(\pi\otimes\pi)$, the row integral is finite for
$\pi$-almost every $x$.  For the branching-process construction, choose a
finite symmetric representative as follows.  Let
\[
 N:=\left\{x:\int_Su\kappa(x,y)\pi(\dd y)=\infty\right\}
\]
and replace $\kappa(x,y)$ by zero whenever $x\in N$ or $y\in N$, retaining the
notation $\kappa$ for the modified representative.
The set $N$ is measurable and $\pi(N)=0$; the modification is symmetric, is
$\pi\otimes\pi$-null, and makes \eqref{eq:IRG-branching-kernel} finite at every
type without changing any $\pi$-rooted branching-process law, integral, or BJR
limit below.  The graphical sequence itself is not modified.
Write $\bP_{x,u}$ and $\bE_{x,u}$ when the root
type is fixed at $x$, and $\bP_\pi$ and $\bE_\pi$ when it has law $\pi$.
Write $T_u\in\mathbb N\cup\{\infty\}$ for the total progeny, and use the
convention $1/T_u=0$ when $T_u=\infty$.  Since every particle has finitely many
children almost surely, survival forever is equivalent to $T_u=\infty$.  Define
\begin{equation}\label{eq:gamma-definition}
 \gamma(u):=\bE_\pi\left[\frac1{T_u}\right]
 =\sum_{k\geq1}\frac{\bP_\pi(T_u=k)}k.
\end{equation}

For the BJR model, Theorem~9.1 of \cite{BJR2007} states that, for each fixed
$k\geq1$,
\begin{equation}\label{eq:BJR-Nk}
 \frac{N_k(G_n)}n\xrightarrow{\bP}\bP_\pi(T_u=k),
\end{equation}
where $N_k(G_n)$ is the number of vertices in components of order $k$.

\begin{theorem}[Component density in the BJR model]\label{thm:BJR-component}
Under \eqref{eq:vertex-space}, \eqref{eq:BJR-edge-prob}, and conditions
\textup{(G1)}--\textup{(G3)},
\begin{equation}\label{eq:BJR-component-limit}
 \frac{K(G_n)}n\longrightarrow\gamma(u)
 =\bE_\pi\left[\frac1{T_u}\right]
 \qquad\text{in probability and in $L^1$.}
\end{equation}
Consequently, $\bE K(G_n)/n\to\gamma(u)$.
\end{theorem}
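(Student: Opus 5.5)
The plan is to write the component count as a sum over vertices,
\[
 K(G_n)=\sum_{k\geq1}\frac{N_k(G_n)}k,
\]
which holds because each component of order $k$ contains exactly $k$ vertices. I then split this sum at a level $L$:
\[
 \frac{K(G_n)}n=\sum_{k\leq L}\frac{N_k(G_n)}{kn}+R_{n,L},
 \qquad 0\leq R_{n,L}\leq\frac1{L+1}\cdot\frac1n\sum_{k>L}N_k(G_n)\leq\frac1{L+1}.
\]
The tail bound is deterministic, since $\sum_kN_k(G_n)=n$. This is the ``uniform tail bound'' mentioned in the introduction. It is trivial here, because the weight $1/k$ makes large components negligible, including the giant component and all other components of order above $L$.

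For fixed $L$, the finite sum converges in probability by \eqref{eq:BJR-Nk}:
\[
 \sum_{k\leq L}\frac{N_k(G_n)}{kn}\xrightarrow{\bP}\sum_{k\leq L}\frac{\bP_\pi(T_u=k)}k.
\]
By \eqref{eq:gamma-definition}, this limit differs from $\gamma(u)$ by at most $\sum_{k>L}\bP_\pi(T_u=k)/k\leq 1/(L+1)$. The event $\{T_u=\infty\}$ contributes nothing, by the convention $1/\infty=0$. Given $\varepsilon>0$, I choose $L$ with $2/(L+1)<\varepsilon/2$. Then
\[
 \bP\bigl(|K(G_n)/n-\gamma(u)|>\varepsilon\bigr)\leq\bP\Bigl(\Bigl|\sum_{k\leq L}\frac{N_k(G_n)}{kn}-\sum_{k\leq L}\frac{\bP_\pi(T_u=k)}k\Bigr|>\varepsilon/2\Bigr)\to0,
\]
which proves convergence in probability.

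For $L^1$ convergence, I use that $0\leq K(G_n)/n\leq1$ deterministically. Bounded convergence in probability therefore upgrades to $L^1$ convergence, for example via uniform integrability or by splitting the expectation on the event $\{|K(G_n)/n-\gamma(u)|>\varepsilon\}$. The final assertion $\bE K(G_n)/n\to\gamma(u)$ is then immediate.

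The only genuinely external input is Theorem~9.1 of \cite{BJR2007} in the form \eqref{eq:BJR-Nk}. The step that needs care is checking that our hypotheses (G1)--(G3), together with the vertex-space condition and our null modification of $\kappa$, match BJR's setting. The modification does not change the $\pi$-rooted branching law, so the limit in \eqref{eq:BJR-Nk} is the same. Note that (G3) is required for BJR's graphical-kernel framework even though the tail argument above does not use it.
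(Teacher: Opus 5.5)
Your proposal is correct and follows the paper's proof essentially step by step. Both use the decomposition $K(G_n)/n=\sum_{k}N_k(G_n)/(kn)$, apply Theorem~9.1 of \cite{BJR2007} to the terms $k\le L$, bound both tails deterministically by $1/(L+1)$ with the choice $2/(L+1)<\varepsilon/2$, and upgrade to $L^1$ and convergence of expectations via $0\le K(G_n)/n\le 1$.
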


\begin{proof}
Every component of order $k$ contains $k$ vertices, so
\begin{equation}\label{eq:component-size-sum}
 \frac{K(G_n)}n=\sum_{k\geq1}\frac{N_k(G_n)}{kn}.
\end{equation}
Fix $R\geq1$.  By \eqref{eq:BJR-Nk}, the finite sum in
\eqref{eq:component-size-sum} over $k\leq R$ converges in probability to
$\sum_{k\leq R}\bP_\pi(T_u=k)/k$.  The two omitted tails satisfy the
deterministic bounds
\begin{align*}
 \sum_{k>R}\frac{N_k(G_n)}{kn}
 &\leq\frac1{R+1}\sum_{k>R}\frac{N_k(G_n)}n
 \leq\frac1{R+1},\\
 \sum_{k>R}\frac{\bP_\pi(T_u=k)}k
 &\leq\frac1{R+1}\sum_{k>R}\bP_\pi(T_u=k)
 \leq\frac1{R+1}.
\end{align*}
Given $\varepsilon>0$, first take $R$ so large that $2/(R+1)<\varepsilon/2$,
and then use convergence of the finite sum to see that the probability that the
absolute difference in \eqref{eq:BJR-component-limit} exceeds $\varepsilon$
tends to zero.  Finally $0\leq K(G_n)/n\leq1$, so convergence in probability to
the constant $\gamma(u)$ implies $L^1$ convergence and convergence of
expectations.
\end{proof}

\subsection{Explicit subcritical and supercritical limits}

Throughout this subsection assume $u\kappa\in L^1(\pi\otimes\pi)$.
Let
\begin{equation}\label{eq:q-definition}
 q_u(x):=\bP_{x,u}(T_u<\infty)
\end{equation}
be the extinction probability when the root type is fixed at $x$.
Lemma~5.6(iv) and Theorem~6.2 of \cite{BJR2007} state that $1-q_u$ is the
maximal nonnegative solution of the survival equation, up to $\pi$-null sets.
For the finite representative fixed above, the branching property gives, for
every $x\in S$,
\begin{equation}\label{eq:q-fixed-point}
 q_u(x)=\exp\left\{-\int_Su\kappa(x,y)(1-q_u(y))\pi(\dd y)\right\}.
\end{equation}
Only the $\pi$-almost-everywhere equivalence class of this solution enters the
BJR graph limit.  Define
\begin{equation}\label{eq:c-measure}
 c_u(\dd x):=q_u(x)\pi(\dd x),\qquad C_u:=c_u(S).
\end{equation}

In the supercritical case, BJR duality (Definition~3.15 and the paragraph
immediately preceding Lemma~6.6 of \cite{BJR2007}) states that, conditional on
extinction and on root type $x$,
$\cT_u$ is a Poisson Galton--Watson tree $\cT_u^q$ with offspring kernel
\begin{equation}\label{eq:dual-kernel}
 K_u^q(x,\dd y):=u\kappa(x,y)c_u(\dd y).
\end{equation}
To see the thinning mechanism, condition on the offspring types of a particle
of type $x$.  The descendant trees become extinct independently, with
probabilities $q_u(y)$ for children of type $y$.  Weighting the offspring
Poisson process by the product of these probabilities and using its generating
functional gives the normalizing constant $q_u(x)$ by
\eqref{eq:q-fixed-point} and changes its intensity from
$u\kappa(x,y)\pi(\dd y)$ to
$u\kappa(x,y)q_u(y)\pi(\dd y)=K_u^q(x,\dd y)$; conditional descendant trees
remain independent.  Iteration gives the quoted dual process.
When extinction is almost sure, the same assertion is immediate because
$q_u=1$ almost everywhere and \eqref{eq:dual-kernel} is the original kernel.
Write $\bE_x^q$ for expectation under this dual process with root type $x$.
For a probability measure $\eta$ on $S$, write
$\bE_\eta^q:=\int_S\bE_x^q\eta(\dd x)$.
Thus, if $\bar c_u:=c_u/C_u$ and $F$ is any nonnegative measurable functional
of a finite typed tree, conditioning first on the root type gives
\begin{equation}\label{eq:dual-functional}
 \bE_\pi[F(\cT_u);T_u<\infty]
 =\int_S c_u(\dd x)\,\bE_x^qF(\cT_u^q)
 =C_u\bE_{\bar c_u}^qF(\cT_u^q).
\end{equation}
The row intensity in \eqref{eq:IRG-branching-kernel} is finite for every $x$
under the chosen representative, so the zero-offspring event gives $q_u(x)>0$
for every $x$; hence $C_u>0$ and $\bar c_u$ is well defined.

\begin{theorem}[General component-density formula]\label{thm:IRG-general}
Under the branching-process setup of this section, suppose
$u\kappa\in L^1(\pi\otimes\pi)$.  Then
\begin{equation}\label{eq:general-density}
 \gamma(u)=C_u-\frac12
 \iint_{S^2}u\kappa(x,y)c_u(\dd x)c_u(\dd y).
\end{equation}
This is an identity for the associated branching process.  For the BJR model,
the integrability assumption is already part of \textup{(G1)}; if $G_n$
satisfies \textup{(G1)}--\textup{(G3)}, then
\begin{equation}\label{eq:general-graph-limit}
 \frac{K(G_n)}n\longrightarrow
 C_u-\frac12\iint_{S^2}u\kappa(x,y)c_u(\dd x)c_u(\dd y)
 \quad\text{in probability and in $L^1$},
\end{equation}
and the same limit holds for expectations.
\end{theorem}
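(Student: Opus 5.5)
We plan to compute $\gamma(u)=\bE_\pi[1/T_u;T_u<\infty]$ by passing to the dual process and then using re-rooting. By \eqref{eq:dual-functional} with $F(t)=1/|t|$,
\[
 \gamma(u)=C_u\,\bE^q_{\bar c_u}\!\left[\frac1{T^q}\right],
\]
where $T^q$ is the total progeny of $\cT_u^q$ started from root law $\bar c_u$. So it suffices to show that the dual tree satisfies the hypotheses of Lemma~\ref{lem:rerooting} and Corollary~\ref{cor:negative-moment} with root law $\bar c_u$ and kernel $K_u^q$, and then to evaluate $\bar m$.

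\textbf{Hypotheses of the re-rooting results.}
\begin{itemize}
 \item \emph{Detailed balance.} The measure
 \[
  \bar c_u(\dd x)K_u^q(x,\dd y)=C_u^{-1}u\kappa(x,y)\,c_u(\dd x)c_u(\dd y)
 \]
 is symmetric because $\kappa$ is symmetric; the modified representative is still symmetric.
 \item \emph{Finite kernel.} $K_u^q(x,S)\le\int u\kappa(x,y)\pi(\dd y)<\infty$ for every $x$, by the choice of representative.
 \item \emph{Almost-sure finiteness.} The dual tree is the law of $\cT_u$ conditioned on extinction, so $T^q<\infty$ almost surely. Formally, take $F=\one_{\{|t|<\infty\}}$ in \eqref{eq:dual-functional}: this gives $\bP_{\bar c_u}^q(T^q<\infty)=C_u^{-1}\int q_u\,\dd\pi=1$. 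This identity needs the dual description as a statement about trees with finiteness built in. The point I would check carefully is that \eqref{eq:dual-functional} identifies $\cT^q_u$ as a genuine Poisson Galton--Watson tree whose full law, not only its restriction to finite trees, is the conditional law. I would argue this from the thinning/generating-functional description: conditionally on extinction every vertex has finitely many descendants, and the recursive tilt identifies the law on all generations.
 \item \emph{Finite mean degree.} We have
 \[
  \bar m=\int K_u^q(x,S)\,\bar c_u(\dd x)=C_u^{-1}\iint u\kappa\,\dd c_u\,\dd c_u\le C_u^{-1}\|u\kappa\|_{L^1(\pi\otimes\pi)}<\infty,
 \]
 using $q_u\le1$ and (G1)-type integrability.
\end{itemize}

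\textbf{Evaluation.} Corollary~\ref{cor:negative-moment} then gives $\bE^q_{\bar c_u}[1/T^q]=1-\bar m/2$, and therefore
\[
 \gamma(u)=C_u-\frac{C_u\bar m}{2}=C_u-\frac12\iint u\kappa\,\dd c_u\,\dd c_u,
\]
which is \eqref{eq:general-density}. The case of almost-sure extinction is included, since then $q_u=1$ a.e. and $c_u=\pi$.

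\textbf{Graph limit.} Combining with Theorem~\ref{thm:BJR-component} gives \eqref{eq:general-graph-limit}, together with the $L^1$ convergence and the convergence of expectations. The changes made to $\kappa$ on a $\pi$-null set do not affect $c_u$-integrals or the $\pi$-rooted laws, so the graph limit is unaffected.

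The main obstacle is the rigorous justification of the dual conditioning on a general standard Borel type space. In particular one must confirm that Lemma~\ref{lem:rerooting} applies to the root law $\bar c_u$, which need not be $\pi$. Detailed balance is the only property of $\pi$ the lemma uses, so this should go through verbatim.
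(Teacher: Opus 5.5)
Your proposal is correct and follows the paper's proof step by step. You check that $\bar c_u(\dd x)K_u^q(x,\dd y)$ is symmetric, that the dual tree is almost surely finite as the conditional law of $\cT_u$ given extinction, and that the mean root degree is finite because $q_u\le 1$; you then apply Corollary~\ref{cor:negative-moment} with root law $\bar c_u$, combine it with \eqref{eq:dual-functional} for $F=1/T$, and invoke Theorem~\ref{thm:BJR-component} for the graph limit. The full-law identification you flag is settled in the paper by the same iterated generating-functional tilt you describe, so your argument needs no further idea.
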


\begin{proof}
By symmetry of $\kappa$ and \eqref{eq:dual-kernel},
\begin{equation}\label{eq:dual-detailed-balance}
 \bar c_u(\dd x)K_u^q(x,\dd y)
 =\frac1{C_u}u\kappa(x,y)c_u(\dd x)c_u(\dd y)
\end{equation}
is symmetric.  The dual tree is finite almost surely because it is the
conditional law of $\cT_u$ given extinction.  Moreover, since $0\leq q_u\leq1$,
\[
 \iint_{S^2}u\kappa(x,y)c_u(\dd x)c_u(\dd y)
 \leq\iint_{S^2}u\kappa(x,y)\pi(\dd x)\pi(\dd y)<\infty.
\]
Thus its mean root degree is finite and equals
\[
 \int_S K_u^q(x,S)\bar c_u(\dd x)
 =\frac1{C_u}\iint_{S^2}u\kappa(x,y)c_u(\dd x)c_u(\dd y).
\]
Corollary~\ref{cor:negative-moment}, applied with root law $\bar c_u$ and
kernel $K_u^q$, therefore gives
\begin{equation}\label{eq:dual-negative-moment}
 \bE_{\bar c_u}^q\frac1{T_u^q}
 =1-\frac1{2C_u}
   \iint_{S^2}u\kappa(x,y)c_u(\dd x)c_u(\dd y).
\end{equation}

Because $1/T_u=0$ on survival, equation \eqref{eq:dual-functional} with
$F=1/T$ yields
\[
 \gamma(u)=\bE_\pi\frac1{T_u}
 =C_u\bE_{\bar c_u}^q\frac1{T_u^q}.
\]
Substitution of \eqref{eq:dual-negative-moment} proves
\eqref{eq:general-density}.  The graph limits now follow from
Theorem~\ref{thm:BJR-component}; its $L^1$ assertion also gives convergence of
expectations.
\end{proof}

\begin{remark}[Comparison with component-measure formulas]
Andreis, K\"onig, Langhammer and Patterson assume that the type space is compact
metric and that the kernel is nonnegative, continuous and irreducible; see the
first paragraph of Section~1.1 in \cite{AndreisEtAl2023}.  Their Theorem~2.1 and
equation~(2.5) identify the law of large numbers for the empirical microscopic
component measure.  Equation~(2.12), together with Remark~4.6 and Lemma~4.7,
directly relates this measure to the total-progeny law of the associated
branching process.  Lemmas~6.7(2) and~6.8, especially equation~(6.16), then give
its total mass as $c(S)-\langle c,\kappa c\rangle/2$, including the critical
case.  Yu and Sun work with a finite type space; Corollary~1 of
\cite{YuSun2024} states the same component-count limit outside criticality, and
Remark~1 states the reciprocal-total-progeny representation and suggests a
proof based on pruning.

For this law-of-large-numbers conclusion, Theorem~\ref{thm:IRG-general} applies
under the broader BJR assumptions: the type space need only be separable, and a
graphical limit kernel may be almost-everywhere continuous, integrable,
unbounded, and reducible.  The proof is also different, using Theorem~9.1 of
\cite{BJR2007}, the reversible-tree identity, and extinction duality.  This
comparison is limited to the component-density law of large numbers: the
large-deviation principle of \cite{AndreisEtAl2023} and the moderate-deviation
principles of \cite{YuSun2024} are stronger results in other directions.
\end{remark}

\begin{corollary}[Subcritical and critical component density]
\label{thm:IRG-subcritical}
Suppose $u\kappa\in L^1(\pi\otimes\pi)$ and $\cT_u$ becomes extinct almost
surely.  Then $q_u=1$ $\pi$-almost everywhere, $c_u=\pi$, and
\begin{equation}\label{eq:finite-average-degree}
 m_\pi(u):=\iint_{S^2}u\kappa(x,y)\pi(\dd x)\pi(\dd y)<\infty.
\end{equation}
In this case
\begin{equation}\label{eq:subcritical-density}
 \gamma(u)=\bE_\pi\frac1{T_u}=1-\frac{m_\pi(u)}2,
\end{equation}
and the BJR graph in Theorem~\ref{thm:IRG-general} satisfies
\begin{equation}\label{eq:IRG-subcritical-limit}
 \frac{K(G_n)}n\longrightarrow1-\frac{m_\pi(u)}2
 \qquad\text{in probability and in $L^1$.}
\end{equation}
\end{corollary}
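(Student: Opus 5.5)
This corollary specializes Theorem~\ref{thm:IRG-general} to the case where extinction is almost sure. We first identify $q_u$ and $c_u$, then substitute.

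\emph{Step 1: $q_u$ and $c_u$.} Almost-sure extinction under the $\pi$-rooted law means
\[
\int_S q_u\,\dd\pi=\bP_\pi(T_u<\infty)=1.
\]
Since $0\le q_u\le1$, this forces $q_u=1$ for $\pi$-almost every $x$. The definition \eqref{eq:c-measure} then gives $c_u=\pi$ as measures, because $\pi$-null modifications of the density do not change $c_u$. In particular $C_u=1$, $\bar c_u=\pi$, and the dual kernel $K_u^q$ in \eqref{eq:dual-kernel} coincides with $K_u$.

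\emph{Step 2: finite mean degree.} Condition \eqref{eq:finite-average-degree} is just the standing hypothesis $u\kappa\in L^1(\pi\otimes\pi)$ restated, since $\kappa\ge0$. The $\pi$-null modification of $\kappa$ made in the branching-process construction does not alter this double integral.

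\emph{Step 3: the formula for $\gamma(u)$.} Substituting $C_u=1$ and $c_u=\pi$ into \eqref{eq:general-density} gives
\[
\gamma(u)=1-\tfrac12 m_\pi(u),
\]
which is \eqref{eq:subcritical-density}. As a check that avoids duality, one can apply Corollary~\ref{cor:negative-moment} directly with root law $\pi$ and kernel $K_u$. Detailed balance holds because $\pi(\dd x)u\kappa(x,y)\pi(\dd y)$ is symmetric by the symmetry of $\kappa$. Almost-sure finiteness is the hypothesis. The mean degree is $\bar m=m_\pi(u)<\infty$.

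\emph{Step 4: the graph limit.} The convergence \eqref{eq:IRG-subcritical-limit}, in probability and in $L^1$, then follows from Theorem~\ref{thm:BJR-component}, or equivalently from \eqref{eq:general-graph-limit}, under (G1)--(G3).

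There is no real obstacle here. The only point needing care is Step 1: passing from the $\pi$-averaged statement of almost-sure extinction to $q_u=1$ $\pi$-a.e. This rests on the bound $q_u\le1$ and on the fact that only the $\pi$-equivalence class of $q_u$ enters $c_u$. The pointwise identity $q_u\equiv1$ is not needed.
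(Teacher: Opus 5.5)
Your proposal is correct and follows the paper's proof. Almost-sure extinction gives $q_u=1$ $\pi$-a.e., hence $C_u=1$ and $c_u=\pi$; integrability gives $m_\pi(u)<\infty$; and the conclusions are the specialization of \eqref{eq:general-density} and \eqref{eq:general-graph-limit}. Your $\pi$-a.e.\ argument in Step~1 is slightly more careful than the paper's one-line remark, and your direct check via Corollary~\ref{cor:negative-moment} with root law $\pi$ and kernel $K_u$ is exactly what the general theorem reduces to in this case.
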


\begin{proof}
Under almost-sure extinction, the definitions give $q_u=1$, $C_u=1$, and
$c_u=\pi$.  The integrability assumption in
Theorem~\ref{thm:IRG-general} makes \eqref{eq:finite-average-degree} finite.
The assertions are now the specialization of
\eqref{eq:general-density} and \eqref{eq:general-graph-limit}.
\end{proof}

For a checkable extinction criterion, define the symmetric,
positivity-preserving integral operator by
\[
 (\mathsf T_{u\kappa}f)(x)
 :=\int_Su\kappa(x,y)f(y)\pi(\dd y).
\]
Following equation~(2.15) of \cite{BJR2007}, its extended $L^2(\pi)$ norm is
\[
 \|\mathsf T_{u\kappa}\|_{2\to2}
 :=\sup\{\|\mathsf T_{u\kappa}f\|_2:f\geq0,\ \|f\|_2\leq1\}\in[0,\infty].
\]
Theorem~6.2 of \cite{BJR2007} states that
$\|\mathsf T_{u\kappa}\|_{2\to2}\leq1$ implies extinction for
$\pi$-almost every initial type (and that positive survival has positive
$\pi$-measure exactly when the norm exceeds one).  Thus the preceding
corollary applies throughout the subcritical regime and at the operator
critical point; irreducibility is not required for this implication.

\begin{remark}[Finite-type form]
When $S=\{1,\ldots,d\}$, $\pi=(\pi_i)$, and $c_i=\pi_iq_u(i)$,
\eqref{eq:general-density} becomes
\[
 \gamma(u)=|c|-\frac u2c^{\mathsf T}\kappa c.
\]
Here $|c|:=\sum_{i=1}^d c_i$ and $\kappa$ denotes the matrix
$(\kappa(i,j))_{i,j\leq d}$.
For $u=1$, this is the law-of-large-numbers value in Corollary~1 of
\cite{YuSun2024}.  Remark~1 of that paper states the
reciprocal-total-progeny representation and suggests a proof based on pruning.
Equations \eqref{eq:gamma-definition} and \eqref{eq:general-density} derive the
representation from fixed-component asymptotics, re-rooting, and extinction
duality under the more general BJR hypotheses.  For compact continuous kernels,
equation~(2.12) of \cite{AndreisEtAl2023} already gives the branching-process
connection, while equation~(6.16) in Lemma~6.7(2), extended to criticality by
Lemma~6.8, gives the same total-mass identity.
\end{remark}

\begin{remark}[Euler characteristic]
For every finite graph,
\[
 K(G)=|V(G)|-|E(G)|+\operatorname{sur}(G),
\]
where
$\operatorname{sur}(G):=\sum_C(|E(C)|-|V(C)|+1)$ is the total cycle surplus,
the sum being over connected components.  Proposition~8.9 of
\cite{BJR2007} gives directly, under \textup{(G1)}--\textup{(G3)},
\[
 \frac{|E(G_n)|}{n}\xrightarrow{\bP}\frac{m_\pi(u)}2.
\]
In the extinction regime, combining this quoted result with
\eqref{eq:IRG-subcritical-limit} and the Euler identity yields
$\operatorname{sur}(G_n)/n\to0$ in probability.
\end{remark}

\section*{Acknowledgments}

The authors thank Hui He of Beijing Normal University for helpful discussions
during the early stages of this work.  This work is supported by the National
Key R\&D Program of China (No.~2022YFA1006500) and by the National Natural
Science Foundation of China (No.~12401171).

\small
\bibliographystyle{abbrv}
\bibliography{pruning-revised}

\end{document}